\renewcommand{\mathcal}{\mathscr}
\theoremstyle{plain} 
\newtheorem{theorem}{Theorem}[section]
\newtheorem{lemma}[theorem]{Lemma}
\newtheorem{corollary}[theorem]{Corollary}
\newtheorem{remark}[theorem]{Remark}
\title[Superdiffusive elephant random walks]{Gaussian fluctuation for superdiffusive elephant random walks}
\author{Naoki Kubota}
\address{College of Science and Technology, Nihon University}
\email{kubota.naoki08@nihon-u.ac.jp}
\author{Masato Takei}
\address{Department of Applied Mathematics, Faculty of Engineering, Yokohama National University}
\email{takei-masato-fx@ynu.ac.jp}
\begin{document}
\maketitle

\begin{abstract}
Elephant random walk is a kind of one-dimensional discrete-time random walk with infinite memory: For each step, with probability $\alpha$ the walker adopts one of his/her previous steps uniformly chosen at random, and otherwise he/she performs like a simple random walk (possibly with bias). It admits phase transition from diffusive to superdiffusive behavior at the critical value $\alpha_c=1/2$. For $\alpha \in (\alpha_c, 1)$, there is a scaling factor $a_n$ of order $n^{\alpha}$ such that the position $S_n$ of the walker at time $n$ scaled by $a_n$ converges to a nondegenerate random variable $W$, whose distribution is not Gaussian. Our main result shows that the fluctuation of $S_n$ around $W \cdot a_n$ is still Gaussian. We also give a description of phase transition induced by bias decaying polynomially in time.
\end{abstract}

\section{Introduction}
\label{intro}

The elephant random walk, introduced by Sch\"{u}tz and Trimper \cite{SchutzTrimper04}, is one of the simplest models of step reinforced random walks: \begin{itemize}
\item The first step $X_1$ of the walker is $+1$ with probability $q$, and $-1$ with probability $1-q$.
\item For each $n=1,2,\cdots$, let $U_n$ be uniformly distributed on $\{1,\cdots,n\}$, and
\begin{align*}
X_{n+1} &=  \begin{cases}
X_{U_n} &\mbox{with probability $p$}, \\
-X_{U_n} &\mbox{with probability $1-p$}. \\
\end{cases}
\end{align*} 
\end{itemize}
Each of choices in the above procedure is made independently. The sequence $\{X_i\}$ generates a one-dimensional random walk $\{S_n\}$ by
\[ S_0:=0,\quad \mbox{and} \quad S_n= \sum_{i=1}^n X_i \quad \mbox{for $n=1,2,\cdots$.} \]

We begin with a brief review of the result in \cite{SchutzTrimper04}.
Let $\mathcal{F}_n$ be the $\sigma$-algebra generated by $X_1,\cdots,X_n$. 
For $n=1,2,\cdots$, the conditional distribution of $X_{n+1}$ given the history up to time $n$ is
\begin{align}
 &P(X_{n+1}= \pm 1 \mid \mathcal{F}_n) \notag \\
 &= \dfrac{\#\{i=1,\cdots,n : X_i=\pm 1\}}{n} \cdot p+  \dfrac{\#\{i=1,\cdots,n : X_i= \mp 1\}}{n} \cdot (1-p) \notag \\
 &= (2p-1) \cdot \dfrac{\#\{i=1,\cdots,n : X_i=\pm 1\}}{n} + 1-p, \label{eq:elephantRWCondDistp}
\end{align}
and the conditional expectation of $X_{n+1}$ is 
\[ E[X_{n+1} \mid \mathcal{F}_n]=(2p-1) \cdot \dfrac{S_n}{n}. \]
The new parameters defined by $\alpha:=2p-1$ and $\beta:=E[X_1]=2q-1$ will be convenient later. Noting that
\[ E[S_{n+1} \mid \mathcal{F}_n]=\left(1+\dfrac{\alpha}{n}\right) S_n, \]
we introduce 
\begin{align} \label{eq:ElephantRWa_nDef}
 a_0:=1, \quad \mbox{and} \quad 
 a_n := \prod_{k=1}^{n-1} \left(1+\dfrac{\alpha}{k}\right)= \dfrac{\Gamma(n+\alpha)}{\Gamma(n)\Gamma(\alpha+1)}\quad \mbox{for $n=1,2,\cdots$.}
\end{align}
Set
\begin{align*}
M_n:=\dfrac{S_n}{a_n}\quad \mbox{for $n=0,1,2,\cdots$.}
\end{align*}
Then $\{M_n\}$ has a martingale property $E[M_{n+1} \mid \mathcal{F}_n]=M_n$. In particular we have $E[M_n] = E[M_1]=E[X_1]=\beta$ and $ E[S_n]=\beta a_n$.
By the Stirling formula for Gamma functions, 
\begin{align}
a_n \sim \dfrac{n^{\alpha}}{\Gamma(\alpha+1)}\quad\mbox{as $n \to \infty$}, \label{eq:ElephantRWanAsymp}
\end{align}
where $x_n \sim y_n$ means that $x_n/y_n$ converges to $1$ as $n \to \infty$. 
By a further calculation, we can see that the mean square displacement satisfies
\[
E[(S_n)^2] 
\sim 
\begin{cases}
\dfrac{1}{1-2\alpha}n &(\alpha<1/2), \\
n\log n &(\alpha=1/2), \\
\dfrac{1}{(2\alpha-1)\Gamma(2\alpha)} n^{2\alpha} &(\alpha>1/2).
\end{cases}
\]

When $\alpha < 1/2$, the elephant random walk is diffusive, and the fluctuation is Gaussian: $S_n/\sqrt{\frac{n}{1-2\alpha}} \stackrel{\text{d}}{\to} N(0,1)$, 
where $\stackrel{\text{d}}{\to}$ denotes the convergence in distribution as $n \to \infty$, and $N(0,1)$ is the standard normal distribution. When $\alpha = 1/2$, the walk is marginally superdiffusive, but still $S_n/\sqrt{n \log n}\stackrel{\text{d}}{\to} N(0,1)$. On the other hand, if $\alpha>1/2$, then $\{M_n\}$ is an $L^2$-bounded martingale, and the martingale convergence theorem shows that $S_n/n^{\alpha}$ converges to a non-degenerate random variable with mean $\tfrac{\beta}{\Gamma(\alpha+1)}$, whose distribution turns out to be non-Gaussian (see \cite{Bercu18,BercuChabanolRuch19} among others), a.s. and in $L^2$. These and further strong limit theorems are obtained by \cite{BaurBertoin16,Bercu18,Collettietal17a,Collettietal17b}. K\"{u}rsten \cite{Kursten16} relates the phase transition described above to the behavior of a spin system on random recursive trees. Variations of elephant random walks studied mainly from  mathematical viewpoint are found in \cite{BercuLaulin19,Bertoin18,Businger18,GutStadtmuller18,GutStadtmuller19}.

By the way, \eqref{eq:elephantRWCondDistp} is equivalent to
\begin{align}
P(X_{n+1}= \pm 1 \mid \mathcal{F}_n) = \alpha \cdot \dfrac{\#\{i=1,\cdots,n : X_i=\pm 1\}}{n} + (1-\alpha) \cdot \dfrac{1}{2}.
\label{eq:elephantRWCondDistalpha}
\end{align}
If $\alpha \in [0,1]$ (i.e. $p \in [1/2,1]$), then we have the following interpretation:
\begin{itemize}
\item With probability $\alpha$, the walker repeats one of his/her previous steps.
\item With probability $1-\alpha$, the walker performs like a simple symmetric random walk.
\end{itemize}
Drezner and Farnum \cite{DreznerFarnum93} studied a closely related problem: In our notation, their model is defined by setting $\beta=\varepsilon$ and
\begin{align}
P(X_{n+1}= \pm 1 \mid \mathcal{F}_n) = \alpha_n \cdot \dfrac{\#\{i=1,\cdots,n : X_i=\pm 1\}}{n} + (1-\alpha_n) \cdot \dfrac{1\pm \varepsilon }{2},
\end{align}
where $\alpha_n \in [0,1]$ and $\varepsilon \in [-1,1]$. In \cite{DreznerFarnum93}, $\{X_i\}$ is called a {\it correlated Bernoulli process} and the distribution of $H_n:=\#\{i=1,\cdots,n : X_i=+ 1\}$ the {\it generalized binomial distribution} with density $\rho:=\dfrac{1+ \varepsilon }{2}$. In this context, various limit theorems for $\{H_n\}$ are obtained by \cite{Heyde04,Jamesetal08,WuQiYang12SPL,ZhangZhang15SPL}. 
Note that those results have a counterpart for the elephant random walk by a simple relation $S_n=H_n-(n-H_n)=2H_n-n$.

In this paper we consider a reasonably wide class of elephant-type step-reinforced random walks, and investigate their limiting behavior. The rest is organized as follows. In section \ref{sec:Results} we give a precise definition of our model and statements of results. The main result in this paper is Theorem \ref{thm:KubotaTakeiSupercriticalCLTLIL}, which says that even in the supercritical regime the fluctuation of the position from the {\it random drift induced by memory effect} is Gaussian. This and related limit theorems (Theorems \ref{thm:Bercu18sLLN} and \ref{thm:Jamesetal08modelMainSubcrit}) are proved in section \ref{sec:ProofLimitThm}. Theorem \ref{thm:Bercu18sLLN} below shows that step-reinforcement does not change the asymptotic speed of {\it asymmetric} simple random walks. We study the effect of step-reinforcement for {\it asymptotically symmetric} simple random walks in Theorem \ref{thm:ElephantPolynomDecay}, which will be proved in section \ref{sec:ProofERWbias}. The facts on calculus and martingale limit theorems on which we rely are summarized in the appendix.

\section{Results} \label{sec:Results}

Hereafter we consider the following class of elephant random walks, namely one-dimensional nearest-neighbor random walks, whose bias can depend on time, with step-reinforcement:
\begin{align}
 P(X_{n+1}= \pm 1 \mid \mathcal{F}_n) 
 &= \alpha \cdot \dfrac{\#\{i=1,\cdots,n : X_i=\pm 1\}}{n} + (1-\alpha) \cdot \dfrac{1 \pm \varepsilon_n}{2}.  \label{eq:Jamesetal08modelDef} 
\end{align}

The following theorems are generalizations of the results obtained in \cite{BaurBertoin16,Bercu18,Collettietal17a,Collettietal17b} for the original elephant random walk, although they are essentially proved in existing literatures (\cite{Jamesetal08,WuQiYang12SPL} among others) for the correlated Bernoulli process. To make this paper reasonably self-contained, we indicate the main lines of proofs in section \ref{sec:ProofLimitThm}.

\begin{theorem} \label{thm:Bercu18sLLN} Assume \eqref{eq:Jamesetal08modelDef}. For any $\alpha \in [0,1)$,
\begin{equation}
\displaystyle \lim_{n \to \infty} \dfrac{S_n-E[S_n]}{n} = 0 \quad \mbox{a.s..} \label{eq:Bercu18sLLNa}
\end{equation}
If $\displaystyle \lim_{n \to \infty} \varepsilon_n =\varepsilon \in [0,1)$ in addition, then
\begin{equation}
\displaystyle \lim_{n \to \infty} \dfrac{E[S_n]}{n} =\varepsilon \quad \mbox{and} \quad  \lim_{n \to \infty} \dfrac{S_n}{n}= \varepsilon \quad \mbox{a.s..} \label{eq:Bercu18sLLNb}
\end{equation}
\end{theorem}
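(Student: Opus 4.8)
The plan is to center the walk and renormalise it into a martingale. First I would record the one-step conditional mean: since $\#\{i\le n:X_i=+1\}-\#\{i\le n:X_i=-1\}=S_n$, \eqref{eq:Jamesetal08modelDef} gives $E[X_{n+1}\mid\mathcal{F}_n]=\alpha S_n/n+(1-\alpha)\varepsilon_n$, so that
\[
E[S_{n+1}\mid\mathcal{F}_n]=\Bigl(1+\frac{\alpha}{n}\Bigr)S_n+(1-\alpha)\varepsilon_n .
\]
Taking expectations gives the identical recursion for $E[S_n]$, and subtracting shows that the centered quantity $T_n:=S_n-E[S_n]$ satisfies $E[T_{n+1}\mid\mathcal{F}_n]=(1+\alpha/n)T_n=(a_{n+1}/a_n)T_n$. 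Hence $N_n:=T_n/a_n$ is a martingale with $N_0=0$, whose increments are $\Delta N_n=(X_n-E[X_n\mid\mathcal{F}_{n-1}])/a_n$, giving the key bound $E[(\Delta N_n)^2\mid\mathcal{F}_{n-1}]=\mathrm{Var}(X_n\mid\mathcal{F}_{n-1})/a_n^2\le 1/a_n^2$.

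To prove \eqref{eq:Bercu18sLLNa} I would then apply the martingale strong law in the form: if $b_n\uparrow\infty$ and $\sum_n E[(\Delta N_n)^2\mid\mathcal{F}_{n-1}]/b_n^2<\infty$ a.s., then $N_n/b_n\to0$ a.s. (this follows from $L^2$-convergence of $\sum_k \Delta N_k/b_k$ together with Kronecker's lemma, as collected in the appendix). The decisive choice is $b_n:=n/a_n$: it is increasing since $b_{n+1}/b_n=(n+1)/(n+\alpha)>1$ for $\alpha<1$ and tends to infinity by \eqref{eq:ElephantRWanAsymp}, and the variance bound collapses the relevant sum to
\[
\sum_{n=1}^{\infty}\frac{E[(\Delta N_n)^2\mid\mathcal{F}_{n-1}]}{b_n^2}\le\sum_{n=1}^{\infty}\frac{1}{a_n^2}\cdot\frac{a_n^2}{n^2}=\sum_{n=1}^{\infty}\frac{1}{n^2}<\infty ,
\]
uniformly in $\alpha\in[0,1)$. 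Thus $N_n/b_n=T_n/n\to0$ a.s., which is precisely \eqref{eq:Bercu18sLLNa}; note that this step uses only $|\varepsilon_n|\le1$ and not convergence of $\varepsilon_n$.

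For \eqref{eq:Bercu18sLLNb} I would solve the mean recursion explicitly. Dividing $E[S_{n+1}]=(1+\alpha/n)E[S_n]+(1-\alpha)\varepsilon_n$ by $a_{n+1}$ and telescoping gives $E[S_n]/a_n=\beta+(1-\alpha)\sum_{k=1}^{n-1}\varepsilon_k/a_{k+1}$. Because $\alpha<1$ the weights $1/a_{k+1}\sim\Gamma(\alpha+1)k^{-\alpha}$ have a divergent partial sum, so the Toeplitz (Ces\`aro) lemma applied with $\varepsilon_k\to\varepsilon$ yields $\sum_{k=1}^{n-1}\varepsilon_k/a_{k+1}\sim\varepsilon\,\Gamma(\alpha+1)n^{1-\alpha}/(1-\alpha)$; multiplying by $a_n\sim n^{\alpha}/\Gamma(\alpha+1)$ then gives $E[S_n]\sim\varepsilon n$, i.e.\ $E[S_n]/n\to\varepsilon$. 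Writing $S_n/n=(S_n-E[S_n])/n+E[S_n]/n$ and invoking \eqref{eq:Bercu18sLLNa} completes $S_n/n\to\varepsilon$ a.s.

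The only genuinely delicate point is the normalisation $b_n=n/a_n$, which is exactly what makes the predictable quadratic variation telescope to $\sum1/n^2$ and thereby handles the diffusive ($\alpha<1/2$), critical ($\alpha=1/2$) and superdiffusive ($\alpha>1/2$) regimes in one stroke, with no case distinction. Everything else---the Stirling asymptotics \eqref{eq:ElephantRWanAsymp} and the Toeplitz averaging---is routine, so I anticipate no serious obstacle beyond bookkeeping of the asymptotics of $a_n$.
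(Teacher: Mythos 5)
Your proposal is correct and follows essentially the same route as the paper: the same martingale $M_n=(S_n-E[S_n])/a_n$ with increments bounded by $2/a_n$, the same combination of the martingale convergence theorem (Theorem \ref{thm:HallHeyde80Thm215}) with Kronecker's lemma (Lemma \ref{lem:KroneckerHeyde}) applied to the increasing weights $n/a_n$ for \eqref{eq:Bercu18sLLNa}, and the same explicit solution of the mean recursion followed by the Stolz--Ces\`aro lemma (Lemma \ref{lem:StolzCesaro}) for \eqref{eq:Bercu18sLLNb}. Two harmless imprecisions: Kronecker's lemma must be applied pathwise, so you need almost sure (not merely $L^2$) convergence of $\sum_k \Delta N_k/b_k$ --- which you do have, since that series is an $L^2$-bounded martingale by the deterministic bound $\sum_k 1/k^2<\infty$ --- and when $\varepsilon=0$ the assertion $\sum_{k}\varepsilon_k/a_{k+1}\sim\varepsilon\,\Gamma(\alpha+1)n^{1-\alpha}/(1-\alpha)$ should be read as the ratio statement $\bigl(\sum_{k}\varepsilon_k/a_{k+1}\bigr)\big/\bigl(\sum_{k}1/a_{k+1}\bigr)\to\varepsilon$, which is what Stolz--Ces\`aro actually delivers and suffices for $E[S_n]/n\to\varepsilon$.
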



\begin{theorem} \label{thm:Jamesetal08modelMainSubcrit} Assume that \eqref{eq:Jamesetal08modelDef} and $\displaystyle \lim_{n \to \infty} \varepsilon_n =\varepsilon \in [0,1)$ hold. Let $\phi(t):=\sqrt{2t\log\log t}$.
\begin{itemize}
\item[(i)] If $0 \leq \alpha <1/2$, then
\begin{align*}
\dfrac{S_n-E[S_n]}{\sqrt{\frac{1-\varepsilon^2}{1-2\alpha} n}} \stackrel{\text{d}}{\to} N\left(0,1\right),\mbox{ and } 
\limsup_{n \to \infty} \pm \dfrac{S_n-E[S_n]}{\phi\left(\frac{1-\varepsilon^2}{1-2\alpha} n\right)}=1 
\mbox{ a.s..}
\end{align*}
\item[(ii)] If $\alpha=1/2$, then
\begin{align*}
\dfrac{S_n-E[S_n]}{\sqrt{(1-\varepsilon^2)n\log n}} \stackrel{\text{d}}{\to} N\left(0,1\right), \mbox{ and } 
\limsup_{n \to \infty} \pm \dfrac{S_n-E[S_n]}{\phi\left((1-\varepsilon^2)n\log n\right)}=1
\mbox{ a.s..}
\end{align*}
\item[(iii)] If $1/2<\alpha<1$, then there exists a random variable $W$ with positive variance such that
\[
\displaystyle \lim_{n \to \infty} \dfrac{S_n-E[S_n]}{a_n} = W \quad \mbox{a.s. and in $L^2$.}
\]
Since $P(W=0)<1$, if $\varepsilon=\beta=0$, then 
\[ P(W>0)=P(W<0)>0. \]
\end{itemize}
\end{theorem}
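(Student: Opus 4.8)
The plan is to build the whole argument on one martingale obtained by centering and rescaling $S_n$. Under \eqref{eq:Jamesetal08modelDef}, writing $H_n^{\pm}=\#\{i\le n:X_i=\pm1\}$ and using $H_n^{+}-H_n^{-}=S_n$, the conditional increment mean is $E[X_{n+1}\mid\mathcal{F}_n]=\alpha S_n/n+(1-\alpha)\varepsilon_n$, so $E[S_{n+1}\mid\mathcal{F}_n]=(1+\alpha/n)S_n+(1-\alpha)\varepsilon_n$; taking expectations shows $E[S_n]$ obeys the same recursion. Subtracting, the centered process $\bar S_n:=S_n-E[S_n]$ satisfies $E[\bar S_{n+1}\mid\mathcal{F}_n]=(1+\alpha/n)\bar S_n$, and since $a_{n+1}=(1+\alpha/n)a_n$, the ratio $\bar M_n:=\bar S_n/a_n$ is an $\mathcal{F}_n$-martingale with increments $\bar M_{n+1}-\bar M_n=D_{n+1}/a_{n+1}$, where $D_{n+1}:=X_{n+1}-E[X_{n+1}\mid\mathcal{F}_n]$ satisfies $|D_{n+1}|\le2$ and $E[D_{n+1}^2\mid\mathcal{F}_n]=1-\bigl(\alpha S_n/n+(1-\alpha)\varepsilon_n\bigr)^2$. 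All three cases are then read off from the asymptotics of the predictable quadratic variation $\langle\bar M\rangle_n=\sum_{k=1}^{n-1}E[D_{k+1}^2\mid\mathcal{F}_k]/a_{k+1}^2$.

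For (iii), $\alpha>1/2$: since $a_k^2\sim k^{2\alpha}/\Gamma(\alpha+1)^2$ with $2\alpha>1$ by \eqref{eq:ElephantRWanAsymp}, one has $\langle\bar M\rangle_\infty\le\sum_k a_{k+1}^{-2}<\infty$, so $\bar M_n$ is $L^2$-bounded and the martingale convergence theorem yields $\bar S_n/a_n=\bar M_n\to W$ a.s.\ and in $L^2$. Because $\varepsilon<1$ gives $E[D_{k+1}^2\mid\mathcal{F}_k]\to1-\varepsilon^2>0$ a.s., infinitely many increments carry positive variance, so $E[W^2]=\lim_nE[\bar M_n^2]>0$ and in particular $P(W=0)<1$. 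For the sign statement I would use that in the symmetric specification $\beta=0$, $\varepsilon_n\equiv0$ (whence $\varepsilon=0$ and $E[S_n]\equiv0$, so $W=\lim S_n/a_n$) the law of $\{X_i\}$ is invariant under the flip $X_i\mapsto-X_i$; this forces $W\stackrel{\text{d}}{=}-W$, hence $P(W>0)=P(W<0)$, and these are positive by nondegeneracy.

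For (i) and (ii), $\alpha\le1/2$, the series $\sum_k a_{k+1}^{-2}$ diverges and $\bar M_n$ is unbounded, so I would instead invoke the martingale central limit theorem and law of the iterated logarithm from the appendix; their Lindeberg-type hypothesis is immediate from $|D_{n+1}/a_{n+1}|\le2/a_{n+1}\to0$. The key input is that $S_k/k\to\varepsilon$ a.s.\ by Theorem \ref{thm:Bercu18sLLN}, whence $E[D_{k+1}^2\mid\mathcal{F}_k]\to1-\varepsilon^2$ a.s.; a Toeplitz argument then gives $\langle\bar M\rangle_n\sim(1-\varepsilon^2)\sum_k a_{k+1}^{-2}$ a.s., which by \eqref{eq:ElephantRWanAsymp} is $\sim(1-\varepsilon^2)\Gamma(\alpha+1)^2 n^{1-2\alpha}/(1-2\alpha)$ when $\alpha<1/2$ and $\sim(1-\varepsilon^2)\Gamma(3/2)^2\log n$ when $\alpha=1/2$. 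Feeding this into $\bar M_n/\sqrt{\langle\bar M\rangle_n}\stackrel{\text{d}}{\to}N(0,1)$ and multiplying by $a_n$, the product $a_n^2\langle\bar M\rangle_n$ simplifies to exactly $\tfrac{1-\varepsilon^2}{1-2\alpha}n$ in (i) and $(1-\varepsilon^2)n\log n$ in (ii), giving the stated normalizations; the same asymptotics fed into the martingale LIL yield the $\limsup$ statements with $\phi(t)=\sqrt{2t\log\log t}$.

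The main obstacle I anticipate is the a.s.\ equivalence $\langle\bar M\rangle_n\sim(1-\varepsilon^2)\sum_k a_{k+1}^{-2}$ underlying (i)--(ii): one must upgrade the pointwise convergence $E[D_{k+1}^2\mid\mathcal{F}_k]\to1-\varepsilon^2$ to control of the whole weighted sum, for which the almost sure law of large numbers of Theorem \ref{thm:Bercu18sLLN} is indispensable, with extra care at the boundary $\alpha=1/2$ where the polynomial growth degenerates to logarithmic. Everything else---checking the precise hypotheses of the appendix's martingale limit theorems and the Stirling estimate \eqref{eq:ElephantRWanAsymp}---is routine.
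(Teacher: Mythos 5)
Your proposal follows essentially the same route as the paper: the same centered martingale $M_n=(S_n-E[S_n])/a_n$, the same use of Theorem \ref{thm:Bercu18sLLN} to obtain $E[(d_k)^2\mid\mathcal{F}_{k-1}]\sim(1-\varepsilon^2)/(a_k)^2$ and hence the quadratic-variation asymptotics, a martingale CLT/LIL for $\alpha\le 1/2$, and $L^2$-bounded martingale convergence plus the sign-flip symmetry for $\alpha>1/2$. The one small discrepancy is that the CLT and LIL for the divergent-variance cases (i)--(ii) are \emph{not} in the appendix (Theorem \ref{thm:Heyde77Theorem1b} applies only when $\sum_k E[(d_k)^2]<+\infty$, i.e.\ case (iii)); the paper instead invokes Lemmas 3.4 and 3.5 of James, James and Qi \cite{Jamesetal08} for that step, so you would need to cite or verify such a statement rather than point to the appendix.
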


\subsection{Gaussian fluctuation for superdiffusive phase}

For the supercritical case, $W \cdot a_n$ can be regarded as the random drift induced by memory effects --- this point of view seems to escape from attention in previous studies. Our main result shows that the fluctuation of elephant random walk from the random drift is still Gaussian, and is striking particularly for the case $E[S_n] = o(n^{\alpha})$ as $n \to \infty$, which is equivalent to $\varepsilon=0$ (e.g. the original elephant random walk) by Theorem \ref{thm:Bercu18sLLN}. This type of result is apparently new even for the correlated Bernoulli process.

\begin{theorem} \label{thm:KubotaTakeiSupercriticalCLTLIL} 
Under the condition of Theorem \ref{thm:Jamesetal08modelMainSubcrit} (iii), we have
\begin{align*}
\dfrac{S_n-E[S_n]-W \cdot a_n}{\sqrt{\tfrac{1-\varepsilon^2}{2\alpha-1} n}} \stackrel{\text{d}}{\to} N\left(0,1\right),
\intertext{ and }
\limsup_{n \to \infty} \pm \dfrac{S_n-E[S_n]-W \cdot a_n}{a_n \cdot \widehat{\phi}\left( \tfrac{1-\varepsilon^2}{2\alpha-1} n\right)}
=1 
\mbox{ a.s.,}
\end{align*}
where $\widehat{\phi}(t):=\sqrt{2t\log |\log t|}$. 
\end{theorem}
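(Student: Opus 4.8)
The plan is to reduce the whole statement to a limit law for the tail of the $L^2$-bounded martingale $N_n:=(S_n-E[S_n])/a_n$. From \eqref{eq:Jamesetal08modelDef} one computes $E[S_{n+1}\mid\mathcal{F}_n]=(1+\alpha/n)S_n+(1-\alpha)\varepsilon_n$, whence $E[S_{n+1}]=(1+\alpha/n)E[S_n]+(1-\alpha)\varepsilon_n$, so that $\{N_n\}$ is a martingale with respect to $\{\mathcal{F}_n\}$; by Theorem \ref{thm:Jamesetal08modelMainSubcrit} (iii) it converges a.s.\ and in $L^2$ to $W$. Writing $\xi_{k+1}:=X_{k+1}-E[X_{k+1}\mid\mathcal{F}_k]$ and using $a_{k+1}=(1+\alpha/k)a_k$, a direct computation gives the increment $N_{k+1}-N_k=\xi_{k+1}/a_{k+1}$, and since $N_k\to W$ this yields the key identity
\begin{equation*}
S_n-E[S_n]-W\cdot a_n=a_n(N_n-W)=-a_n\sum_{k=n}^\infty\frac{\xi_{k+1}}{a_{k+1}}.
\end{equation*}
Thus it suffices to prove a central limit theorem and a law of the iterated logarithm for the tail series $\sum_{k\geq n}\xi_{k+1}/a_{k+1}$, scaled by $a_n$.

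The next step is the variance computation that fixes the constants. Since $X_{k+1}\in\{-1,+1\}$, we have $E[\xi_{k+1}^2\mid\mathcal{F}_k]=1-(E[X_{k+1}\mid\mathcal{F}_k])^2=1-(\alpha S_k/k+(1-\alpha)\varepsilon_k)^2$. By Theorem \ref{thm:Bercu18sLLN} we have $S_k/k\to\varepsilon$ a.s., and $\varepsilon_k\to\varepsilon$ by assumption, so $E[\xi_{k+1}^2\mid\mathcal{F}_k]\to1-\varepsilon^2$ a.s. Combining this with $a_{k+1}^2\sim k^{2\alpha}/\Gamma(\alpha+1)^2$ and $\sum_{k\geq n}k^{-2\alpha}\sim n^{1-2\alpha}/(2\alpha-1)$ (here $2\alpha>1$), the predictable tail variance satisfies
\begin{equation*}
v_n:=\sum_{k=n}^\infty E\!\left[\Bigl(\tfrac{\xi_{k+1}}{a_{k+1}}\Bigr)^2\Bigm|\mathcal{F}_k\right]\sim\frac{(1-\varepsilon^2)\Gamma(\alpha+1)^2}{2\alpha-1}\,n^{1-2\alpha}\quad\text{a.s.},
\end{equation*}
and therefore $a_n^2 v_n\sim\frac{1-\varepsilon^2}{2\alpha-1}n$, which already identifies both normalizations in the statement. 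Crucially, the limit $1-\varepsilon^2$ is deterministic, which is exactly what will make the Gaussian limit a genuine normal law rather than a mixture, despite the subtraction of the random drift $W\cdot a_n$.

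For the central limit theorem I would truncate the tail at a level $m=m(n)$ with $m/n\to\infty$ growing slowly (e.g.\ $m=\lfloor n\log n\rfloor$), splitting $a_n\sum_{k\geq n}\xi_{k+1}/a_{k+1}$ into the finite sum $R_n':=a_n\sum_{k=n}^{m-1}\xi_{k+1}/a_{k+1}$ and a remainder. The remainder has squared $L^2$-norm of order $n^{2\alpha}/m^{2\alpha-1}=o(n)$, hence is negligible after dividing by $\sqrt{n}$. The term $R_n'$ is a triangular array of bounded martingale differences: each summand obeys $|a_n\xi_{k+1}/a_{k+1}|\leq 2a_n/a_{n+1}=O(1)$, so after dividing by $\sqrt{n}$ the summands are uniformly $O(n^{-1/2})$ and the conditional Lindeberg condition is automatic, while by the computation above and a Toeplitz argument the predictable variance of $R_n'$ divided by $n$ converges to $\frac{1-\varepsilon^2}{2\alpha-1}$ a.s. The martingale central limit theorem from the appendix then gives $R_n'/\sqrt{n}\stackrel{\text{d}}{\to}N(0,\frac{1-\varepsilon^2}{2\alpha-1})$, and combining with the negligible remainder yields the stated convergence to $N(0,1)$.

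For the law of the iterated logarithm I would apply a law of the iterated logarithm for tail sums of martingale differences to $\sum_{k\geq n}\xi_{k+1}/a_{k+1}$, with predictable tail variance $v_n$. The boundedness hypothesis is checked directly: $|\xi_{k+1}/a_{k+1}|=O(k^{-\alpha})$ while $\sqrt{v_k/\log\log(1/v_k)}$ is of order $k^{1/2-\alpha}/\sqrt{\log\log k}$, so the ratio is $O(k^{-1/2}\sqrt{\log\log k})\to0$. This would give
\begin{equation*}
\limsup_{n\to\infty}\pm\frac{\sum_{k\geq n}\xi_{k+1}/a_{k+1}}{\sqrt{2v_n\log\log(1/v_n)}}=1\quad\text{a.s.}
\end{equation*}
Multiplying through by $a_n$ and using $a_n^2 v_n\sim\frac{1-\varepsilon^2}{2\alpha-1}n$ together with $\log\log(1/v_n)\sim\log\log n\sim\log|\log(\tfrac{1-\varepsilon^2}{2\alpha-1}n)|$ converts the normalization into $a_n\cdot\widehat{\phi}(\tfrac{1-\varepsilon^2}{2\alpha-1}n)$, which is the claim. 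I expect this last step to be the main obstacle: the standard martingale law of the iterated logarithm is designed for forward partial sums whose predictable variance tends to infinity, whereas here it must be recast for a tail sum whose predictable variance $v_n\to0$; one must also ensure that the a.s.\ equivalence $v_n\sim\frac{(1-\varepsilon^2)\Gamma(\alpha+1)^2}{2\alpha-1}n^{1-2\alpha}$ is sharp enough to replace $v_n$ by its deterministic equivalent inside the iterated logarithm without disturbing the constant $1$.
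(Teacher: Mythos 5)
Your reduction is the same as the paper's: with $M_n=(S_n-E[S_n])/a_n$ a square-integrable martingale converging to $W$, one has $S_n-E[S_n]-W\cdot a_n=-a_n\sum_{k>n}d_k$ with $d_k=(X_k-E[X_k\mid\mathcal{F}_{k-1}])/a_k$, and the whole theorem becomes a CLT and LIL for tail sums of an $L^2$-convergent martingale; your variance computation $a_n^2\sum_{k\geq n}E[(d_k)^2\mid\mathcal{F}_{k-1}]\sim\tfrac{1-\varepsilon^2}{2\alpha-1}n$ (via $S_k/k\to\varepsilon$ from Theorem \ref{thm:Bercu18sLLN}) is precisely the paper's, and the observation that the limit is deterministic is the right point to emphasize. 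For the CLT you then diverge: the paper directly invokes Heyde's tail-sum CLT (Theorem \ref{thm:Heyde77Theorem1b} (ii)), verifying conditions a) and b) with fourth-moment bounds, whereas you truncate the tail at $m(n)$ and apply a standard triangular-array martingale CLT to the finite piece, with the remainder killed in $L^2$. That route is correct and arguably more self-contained (the uniform bound $a_n|d_{k+1}|\leq 2a_n/a_{n+1}$ makes Lindeberg trivial), though it uses a martingale CLT not actually stated in the paper's appendix.

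The LIL half is where your proposal has a real gap, and you partly flag it yourself. The tool you need exists and is exactly Theorem \ref{thm:Heyde77Theorem1b} (iii) (Heyde's tail-sum LIL), so the ``main obstacle'' you anticipate is not recasting a forward LIL but verifying Heyde's hypotheses a'), c), d). Your sketch checks only a Kolmogorov-type boundedness condition together with the asymptotics of the \emph{predictable} tail variance. Condition a'), however, concerns the normalized \emph{actual} tail sum of squares, $s_n^{-2}\sum_{k\geq n}(d_k)^2\to1$ a.s., and passing from the predictable to the actual sum is a genuine step: the paper does it by showing $\sum_k s_k^{-4}E[(d_k)^4\mid\mathcal{F}_{k-1}]<\infty$, applying Theorem \ref{thm:HallHeyde80Thm215} to $\sum_k s_k^{-2}\{(d_k)^2-E[(d_k)^2\mid\mathcal{F}_{k-1}]\}$, and then using the tail version of Kronecker's lemma (Lemma \ref{lem:KroneckerHeyde} (ii)). Conditions c) and d) also follow from the same fourth-moment bounds, since $|d_k|\leq 2/a_k=O(k^{-\alpha})$ and $s_k^{-4}E[(d_k)^4]=O(k^{-2})$. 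None of this is difficult, but it is not implied by what you wrote; as it stands your LIL argument appeals to an unspecified theorem under conditions you have not matched to it. Your final replacement of $\log\log(1/v_n)$ by $\log|\log(\tfrac{1-\varepsilon^2}{2\alpha-1}n)|$ is harmless since only the leading order of the iterated logarithm matters.
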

\begin{remark} When $\alpha=1$, since $\displaystyle \dfrac{S_n}{n} = X_1$ for any $n=1,2,\cdots$, the central limit theorem for $\{S_n-X_1 \cdot n\}$ does not hold.
\end{remark}

\subsection{Elephant random walk with polynomially decaying bias}

By Theorem \ref{thm:Bercu18sLLN}, if $\varepsilon_n \to \varepsilon \in (0,1)$ as $n \to \infty$, then the asymptotic speed of the walker is not affected by $\alpha$ at all.
Thus we are interested in the case $\varepsilon_n$ vanishes as $n \to \infty$. 
The following theorem describes various phases arising for the long time behavior of $\{S_n\}$ in this setting, and shows that $\gamma=1/2$ is critical. The convergence in $L^2$ as $n \to \infty$ is denoted by $\stackrel{L^2}{\to}$.

\begin{theorem} \label{thm:ElephantPolynomDecay} Assume that $\varepsilon_n = n^{-\gamma}$ with $\gamma>0$. The limiting distribution of the deviation $\{ S_n-E[S_n] \}$ from the mean is given by Theorem \ref{thm:Jamesetal08modelMainSubcrit} with $\varepsilon=0$. In addition, we have the following for $\{S_n\}$.
\begin{itemize}
\item[(i)] Suppose that $0<\gamma<1/2$.
\begin{itemize}
\item[a)] If $0 \leq \alpha < 1-\gamma$, then $\dfrac{S_n}{n^{1-\gamma}} \stackrel{L^2}{\to} \dfrac{1-\alpha}{1-\gamma-\alpha} >0$.
\item[b)] If $\alpha=1-\gamma$, then $\dfrac{S_n}{n^{\alpha}\log n} \stackrel{L^2}{\to} 1-\alpha >0$. 
\item[c)] If $1-\gamma<\alpha<1$, then $\displaystyle \lim_{n \to \infty} \dfrac{S_n}{n^{\alpha}} =\widehat{W}$ a.s. and in $L^2$.
\end{itemize}
\item[(ii)] Suppose that $\gamma = 1/2$.
\begin{itemize}
\item[a)] If $0 \leq \alpha < 1/2$, then 
$\dfrac{S_n}{\sqrt{n}} \stackrel{\text{d}}{\to} N\left(\dfrac{2-2\alpha}{1-2\alpha},\dfrac{1}{1-2\alpha}\right)$.
\item[b)] If $\alpha=1/2$, then $\dfrac{S_n}{\sqrt{n (\log n)^2}} \stackrel{L^2}{\to}\dfrac{1}{2} $. 
\item[c)] If $1/2<\alpha<1$, then $\displaystyle \lim_{n \to \infty} \dfrac{S_n}{n^{\alpha}} =\widehat{W}$ a.s. and in $L^2$.
\end{itemize}
\item[(iii)] Suppose that $\gamma>1/2$. 
\begin{itemize}
\item[a)] If $0 \leq \alpha < 1/2$, then $\dfrac{S_n}{\sqrt{\frac{n}{1-2\alpha}}} \stackrel{\text{d}}{\to} N\left(0,1\right)$, and 
$\displaystyle \limsup_{n \to \infty} \pm \dfrac{S_n}{\phi\left(\frac{n}{1-2\alpha} \right)}=1$ a.s..
\item[b)] If $\alpha = 1/2$, then
$\dfrac{S_n}{\sqrt{n\log n}} \stackrel{\text{d}}{\to} N\left(0,1\right)$, and $\displaystyle \limsup_{n \to \infty} \pm \dfrac{S_n}{\phi\left(n\log n\right)}=1$ a.s..
\item[c)] If $1/2<\alpha<1$, then $\displaystyle \lim_{n \to \infty} \dfrac{S_n}{n^{\alpha}} =\widehat{W}$ a.s. and in $L^2$.
\end{itemize}
\end{itemize}
The random variable $\widehat{W}$ satisfies that  $E[\widehat{W}]>\tfrac{\beta}{\Gamma(\alpha+1)}$ and $E[\widehat{W}^2] \in (0,+\infty)$.
\end{theorem}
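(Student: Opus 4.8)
The plan is to write $S_n = E[S_n] + (S_n - E[S_n])$ and analyze the two summands separately. The centered part $S_n - E[S_n]$ is governed by Theorem~\ref{thm:Jamesetal08modelMainSubcrit} with $\varepsilon = 0$, which applies because $\varepsilon_n = n^{-\gamma}\to 0$; in particular its standard deviation is of order $\sqrt{n}$, $\sqrt{n\log n}$, or $a_n\sim n^{\alpha}$ according as $\alpha < 1/2$, $\alpha = 1/2$, or $\alpha > 1/2$. The mean $E[S_n]$ is deterministic and will be computed exactly. Every phase in the theorem then comes from comparing the growth rate of $E[S_n]$ with that of the fluctuation, and the boundaries $\alpha = 1-\gamma$ and $\alpha = 1/2$, together with the value of $\gamma$, are precisely where the dominant term switches.

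First I would solve for $E[S_n]$. Taking expectations in $E[S_{n+1}\mid\mathcal{F}_n] = (1+\alpha/n)S_n + (1-\alpha)\varepsilon_n$ gives $E[S_{n+1}] = (1+\alpha/n)E[S_n] + (1-\alpha)\varepsilon_n$; dividing by $a_{n+1} = (1+\alpha/n)a_n$ and telescoping from $E[S_1] = \beta$ yields
\[
E[S_n] = a_n\left(\beta + (1-\alpha)\sum_{k=1}^{n-1}\frac{\varepsilon_k}{a_{k+1}}\right).
\]
By \eqref{eq:ElephantRWanAsymp} the summand is asymptotic to $\Gamma(\alpha+1)k^{-(\gamma+\alpha)}$, so the partial sum grows like $n^{1-\gamma-\alpha}/(1-\gamma-\alpha)$ for $\alpha < 1-\gamma$, like $\log n$ for $\alpha = 1-\gamma$, and converges to a finite limit for $\alpha > 1-\gamma$. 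Multiplying back by $a_n\sim n^{\alpha}/\Gamma(\alpha+1)$ gives the three rates
\[
\frac{E[S_n]}{n^{1-\gamma}}\to\frac{1-\alpha}{1-\gamma-\alpha}\ \ (\alpha<1-\gamma),\qquad \frac{E[S_n]}{n^{\alpha}\log n}\to 1-\alpha\ \ (\alpha=1-\gamma),\qquad \frac{E[S_n]}{a_n}\to\mu\ \ (\alpha>1-\gamma),
\]
where $\mu := \beta + (1-\alpha)\sum_{k=1}^{\infty}\varepsilon_k/a_{k+1} > \beta$.

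The conclusions now follow by matching these rates against the fluctuation. In cases (i)a, (i)b and (ii)b the rate of $E[S_n]$ strictly dominates that of $S_n - E[S_n]$, so after dividing by the rate of $E[S_n]$ the centered part tends to $0$ in $L^2$ and one is left with the deterministic constant, giving the stated $L^2$-limits. In cases (iii)a and (iii)b the fluctuation dominates ($E[S_n]$ being of smaller order than the fluctuation scale in each subcase), so the central limit theorem for $S_n - E[S_n]$ carries over to $S_n$ by Slutsky's theorem, and the law of the iterated logarithm carries over because adding the negligible deterministic term, with $E[S_n]/\phi(\cdot)\to 0$, does not change the $\limsup$. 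The genuinely critical regime is (ii)a ($\gamma = 1/2$, $0\le\alpha<1/2$): here both $E[S_n]\sim\frac{2-2\alpha}{1-2\alpha}\sqrt{n}$ and $S_n - E[S_n]$, which after dividing by $\sqrt{n/(1-2\alpha)}$ is asymptotically $N(0,1)$, are of the same order $\sqrt{n}$, so Slutsky's theorem fuses the deterministic drift and the Gaussian fluctuation into $N\!\left(\frac{2-2\alpha}{1-2\alpha},\frac{1}{1-2\alpha}\right)$. Finally, in each case (i)c, (ii)c, (iii)c one has $\alpha > \max\{1/2,1-\gamma\}$, so $E[S_n]/a_n\to\mu$ and $(S_n-E[S_n])/a_n\to W$ simultaneously; hence $S_n/n^{\alpha}\to\widehat{W} := (\mu+W)/\Gamma(\alpha+1)$ a.s.\ and in $L^2$. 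Since $E[W]=0$ and $\mu>\beta$ we obtain $E[\widehat{W}] = \mu/\Gamma(\alpha+1) > \beta/\Gamma(\alpha+1)$, and $E[\widehat{W}^2] = (\mu^2 + E[W^2])/\Gamma(\alpha+1)^2\in(0,\infty)$ because $W$ has finite positive variance.

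The main obstacle I expect is the bookkeeping in the critical and boundary cases. Away from the boundaries a crude order-of-magnitude comparison suffices, but to pin down the exact constants $\frac{1-\alpha}{1-\gamma-\alpha}$, $1-\alpha$, and especially $\frac{2-2\alpha}{1-2\alpha}$ in (ii)a, one needs the precise asymptotics of both the Gamma quotient \eqref{eq:ElephantRWanAsymp} for $a_n$ and the partial sums $\sum_k k^{-(\gamma+\alpha)}$ near their critical exponent, and one must verify that the resulting error terms are genuinely $o$ of the relevant normalization, so that $L^2$-convergence (rather than mere convergence of ratios of expectations) holds and the subdominant summand vanishes after scaling.
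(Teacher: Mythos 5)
Your decomposition $S_n=E[S_n]+(S_n-E[S_n])$, the exact formula for $E[S_n]$, its three asymptotic regimes split at $\gamma+\alpha=1$, and the rate comparison are exactly the paper's argument (Lemmas \ref{lem:BERW171109} and \ref{lem:171113biasedRWESn}, then Theorem \ref{thm:Jamesetal08modelMainSubcrit} plus Slutsky for cases (ii)a, (iii)a--b and all c) cases, including the identification $E[\widehat{W}]=C(\alpha,\beta,\gamma)>\beta/\Gamma(\alpha+1)$). The one place you genuinely depart from the paper is the $L^2$-limit cases (i)a, (i)b and (ii)b. The paper does not argue there that ``the standard deviation of $S_n-E[S_n]$ is of smaller order''; it instead derives a second recursion for $E[(S_n)^2]$ (display \eqref{eq:biasdecayESn2}), proves $E[(S_n)^2]\sim(E[S_n])^2$ in Lemmas \ref{lem:190306biasdecayESn2-3} and \ref{lem:190306biasdecayESn2-6}, and expands the square (Corollaries \ref{cor:190306biasdecayESn2-3} and \ref{cor:190306biasdecayESn2-6}). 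Your route is shorter, but be aware that the variance bounds you invoke do \emph{not} follow from the statements of Theorem \ref{thm:Jamesetal08modelMainSubcrit}(i)--(ii): convergence in distribution and a law of the iterated logarithm do not control second moments. You must extract $\mathrm{Var}(S_n)=a_n^2\sum_{k\le n}E[(d_k)^2]\le a_n^2\sum_{k\le n}a_k^{-2}$ (which is $O(n)$, $O(n\log n)$ or $O(n^{2\alpha})$ in the three regimes) from the orthogonality of the martingale increments and the pointwise bound $E[(d_k)^2]\le a_k^{-2}$ coming from \eqref{eq:ElephantRWmartdiff2}. Once that is made explicit the two approaches are equivalent, since $E[(S_n)^2]\sim(E[S_n])^2$ is precisely $\mathrm{Var}(S_n)=o((E[S_n])^2)$; what the paper's second-moment recursion buys in exchange for the extra computation is the exact asymptotic constant for $E[(S_n)^2]$, while your argument recycles machinery already set up for the fluctuation theorems.
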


In the case (i), there is a competition between the memory effect and the bias,
and the walker is (zero-speed) transient. Together with Theorems \ref{thm:Jamesetal08modelMainSubcrit} and \ref{thm:KubotaTakeiSupercriticalCLTLIL}, for a fixed $\gamma < 1/2$ we can observe several transition of limiting behavior as $\alpha$ increases from $0$ to $1$. On the other hand, in the case (iii) the bias vanishes `rapidly' and the limit theorems given above is qualitatively the same as the original elephant random walk ($\varepsilon_n \equiv 0$). Somewhat peculiar behavior is found in the `critical' case (ii).

\section{Limit theorems} \label{sec:ProofLimitThm}

Let $\alpha \in [0,1)$ and $\{\varepsilon_n\}_{n=1,2,\cdots} \subset [0,1]$.
Assume that the conditional distribution of $X_{n+1}$ is given by \eqref{eq:Jamesetal08modelDef}.
For $n=1,2,\cdots$, we have
\begin{align}
 E[X_{n+1} \mid \mathcal{F}_n]&=\alpha \cdot \dfrac{S_n}{n} + (1-\alpha) \cdot \varepsilon_n,\label{eq:Jamesetal08modelDef2}
\intertext{and}
 E[S_{n+1} \mid \mathcal{F}_n]&=\gamma_n S_n+ (1-\alpha) \cdot \varepsilon_n, \label{eq:Jamesetal08modelDef3}
\end{align}
where $\gamma_n:=1+\dfrac{\alpha}{n}$. For $n=0,1,2,\cdots$, we set
\[ M_n:=\dfrac{S_n-E[S_n]}{a_n}, \]
where $a_n$ is defined in \eqref{eq:ElephantRWa_nDef}. Let $\mathcal{F}_0$ denote the trivial $\sigma$-algebra. 

\begin{lemma} \label{lem:ElephantRWmartdiffn} 
The sequence $\{M_n\}$ is a square-integrable martingale with mean $0$.
\end{lemma}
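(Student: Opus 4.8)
The plan is to verify the martingale property directly, then establish square-integrability. First I would compute the increment $M_{n+1}-M_n$. Writing $M_{n+1}-M_n = \frac{S_{n+1}-E[S_{n+1}]}{a_{n+1}} - \frac{S_n-E[S_n]}{a_n}$ and using the recursion $a_{n+1}=\gamma_n a_n$ from \eqref{eq:ElephantRWa_nDef} (since $\gamma_n = 1+\frac{\alpha}{n}$), I can rewrite $\frac{S_n-E[S_n]}{a_n} = \frac{\gamma_n(S_n-E[S_n])}{a_{n+1}}$, so that
\[
M_{n+1}-M_n = \frac{(S_{n+1}-\gamma_n S_n)-E[S_{n+1}-\gamma_n S_n]}{a_{n+1}}.
\]
By \eqref{eq:Jamesetal08modelDef3} we have $E[S_{n+1}\mid\mathcal{F}_n]-\gamma_n S_n = (1-\alpha)\varepsilon_n$, which is $\mathcal{F}_n$-measurable (indeed deterministic), and $S_{n+1}-\gamma_n S_n = S_{n+1}-E[S_{n+1}\mid\mathcal{F}_n]+(1-\alpha)\varepsilon_n$. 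Hence the numerator equals $S_{n+1}-E[S_{n+1}\mid\mathcal{F}_n] = X_{n+1}-E[X_{n+1}\mid\mathcal{F}_n]$, giving
\[
M_{n+1}-M_n = \frac{X_{n+1}-E[X_{n+1}\mid\mathcal{F}_n]}{a_{n+1}}.
\]

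Taking $E[\,\cdot\mid\mathcal{F}_n]$ of this expression immediately yields $E[M_{n+1}-M_n\mid\mathcal{F}_n]=0$, which is the martingale property. For the mean, since $E[S_n]=\beta a_n$ (noted in the introduction) and $M_n=\frac{S_n-E[S_n]}{a_n}$, I get $E[M_n]=0$ directly, consistent with $E[M_{n+1}]=E[M_n]$.

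For square-integrability I would bound the conditional second moment of the increment. Because $|X_{n+1}|=1$, we have $E[X_{n+1}^2\mid\mathcal{F}_n]=1$ and $(E[X_{n+1}\mid\mathcal{F}_n])^2\in[0,1]$, so
\[
E[(M_{n+1}-M_n)^2\mid\mathcal{F}_n] = \frac{1-(E[X_{n+1}\mid\mathcal{F}_n])^2}{(a_{n+1})^2} \le \frac{1}{(a_{n+1})^2}.
\]
Summing the telescoping identity $E[M_{n}^2]=\sum_{k=1}^{n}E[(M_k-M_{k-1})^2]$ (with $M_0=0$) and using $a_k\sim k^\alpha/\Gamma(\alpha+1)$ from \eqref{eq:ElephantRWanAsymp}, the bound $|M_{n+1}-M_n|\le 2/a_{n+1}$ shows each increment is in $L^2$, hence each $M_n$ is square-integrable. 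I do not need uniform $L^2$-boundedness here (that requires $\alpha>1/2$); only finiteness of $E[M_n^2]$ for each fixed $n$ is claimed, and this follows since $M_n$ is a finite sum of bounded increments.

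I do not anticipate a genuine obstacle: the only point requiring care is the algebraic manipulation showing the numerator collapses to $X_{n+1}-E[X_{n+1}\mid\mathcal{F}_n]$, which hinges on correctly using the recursion $a_{n+1}=\gamma_n a_n$ together with \eqref{eq:Jamesetal08modelDef3}. Everything else is routine, and the boundedness $|X_{n+1}|=1$ trivializes the integrability estimate.
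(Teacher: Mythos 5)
Your proof is correct and follows essentially the same route as the paper: rewrite the increment using $a_{n+1}=\gamma_n a_n$ and \eqref{eq:Jamesetal08modelDef3} to get $M_{n+1}-M_n=\bigl(X_{n+1}-E[X_{n+1}\mid\mathcal{F}_n]\bigr)/a_{n+1}$, then deduce the martingale property and square-integrability from $|X_{n+1}|=1$. One cosmetic remark: in the general model $E[S_n]=\beta a_n$ no longer holds (see Lemma \ref{lem:BERW171109}), but this is harmless since $E[M_n]=0$ is immediate from the centering by $E[S_n]$.
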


\begin{proof} For each $n=1,2,\cdots$, we have
\begin{align}
M_{n+1}-M_n &= \dfrac{S_{n+1}-E[S_{n+1}]}{a_{n+1}}-\dfrac{S_n-E[S_n]}{a_n} \notag \\
&= \dfrac{S_{n+1}-\gamma_n S_n-E[S_{n+1}-\gamma_n S_n] }{a_{n+1}} \notag \\ 
&= \dfrac{S_{n+1}-\gamma_n S_n-(1-\alpha)\varepsilon_n}{a_{n+1}} \notag \\
&= \dfrac{S_{n+1}-E[S_{n+1} \mid \mathcal{F}_n]}{a_{n+1}} = \dfrac{X_{n+1}-E[X_{n+1} \mid \mathcal{F}_n]}{a_{n+1}},\label{eq:ElephantRWmartdiff1}
\end{align}
which shows that $E[M_{n+1}-M_n \mid \mathcal{F}_n]=0$. Moreover,
\begin{align}
E[M_{n+1}^2 - M_n^2 \mid \mathcal{F}_n] &= E[(M_{n+1}-M_n)^2 \mid \mathcal{F}_n] \notag \\
&=\dfrac{E[(X_{n+1}-E[X_{n+1} \mid \mathcal{F}_n])^2 \mid \mathcal{F}_n]}{(a_{n+1})^2} \notag \\
&=\dfrac{E[X_{n+1}^2 \mid \mathcal{F}_n] -(E[X_{n+1} \mid \mathcal{F}_n])^2}{(a_{n+1})^2} \notag \\
&=\dfrac{1 -(E[X_{n+1} \mid \mathcal{F}_n])^2}{(a_{n+1})^2}.\label{eq:ElephantRWmartdiff2}
\end{align}
Note that \eqref{eq:ElephantRWmartdiff1} and \eqref{eq:ElephantRWmartdiff2} hold also for $n=0$. Since $|X_{n+1}|=1$, we can see 
that $E[(M_n)^2]<+\infty$ for each $n$.
\end{proof}

For $k=1,2,\cdots$, let
\[ d_k:=M_k-M_{k-1}=\dfrac{X_k-E[X_k \mid \mathcal{F}_{k-1}]}{a_k}. \]
Note that $|d_k| \leq \dfrac{2}{a_k} (\leq 2)$.

\begin{lemma} \label{lem:BERW171109} Assume \eqref{eq:Jamesetal08modelDef}. For $n=1,2,\cdots$,
\begin{equation}
E[S_n] = \beta a_n + (1-\alpha) a_n \sum_{\ell=1}^{n-1} \dfrac{\varepsilon_{\ell}}{a_{\ell+1}}. \label{eq:BERW171109E}
\end{equation}
\end{lemma}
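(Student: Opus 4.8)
The plan is to derive \eqref{eq:BERW171109E} directly from the recursion \eqref{eq:Jamesetal08modelDef3} for $E[S_{n+1}\mid\mathcal{F}_n]$ by taking expectations and solving the resulting linear recurrence. First I would take the expectation of both sides of \eqref{eq:Jamesetal08modelDef3}, using the tower property, to obtain the deterministic recurrence
\[
E[S_{n+1}] = \gamma_n E[S_n] + (1-\alpha)\varepsilon_n,
\qquad \gamma_n = 1 + \frac{\alpha}{n},
\]
valid for $n=1,2,\ldots$, with initial value $E[S_1]=E[X_1]=\beta$. This is an inhomogeneous first-order linear recurrence whose homogeneous part has exactly the product structure built into $a_n$.

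The key observation is that $\gamma_n = a_{n+1}/a_n$ by the definition \eqref{eq:ElephantRWa_nDef}, so dividing the recurrence by $a_{n+1}$ telescopes the homogeneous part. Concretely, writing $b_n := E[S_n]/a_n$ and dividing through by $a_{n+1}$ gives
\[
b_{n+1} = b_n + (1-\alpha)\frac{\varepsilon_n}{a_{n+1}}.
\]
Summing this from $n=1$ up to $n-1$ yields $b_n = b_1 + (1-\alpha)\sum_{\ell=1}^{n-1}\varepsilon_\ell/a_{\ell+1}$. Since $a_1=1$ and $E[S_1]=\beta$, we have $b_1=\beta$, and multiplying back by $a_n$ produces exactly \eqref{eq:BERW171109E}.

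There is really no hard step here: the only things to verify are the algebraic identity $\gamma_n = a_{n+1}/a_n$ (immediate from the telescoping product in \eqref{eq:ElephantRWa_nDef}) and the correct handling of the base case so that the sum starts at $\ell=1$ and the empty sum for $n=1$ reproduces $E[S_1]=\beta a_1=\beta$. If I wanted an alternative I could equally well prove \eqref{eq:BERW171109E} by induction on $n$: assuming it at stage $n$, substitute into $E[S_{n+1}]=\gamma_n E[S_n]+(1-\alpha)\varepsilon_n$ and use $\gamma_n a_n = a_{n+1}$ to absorb the new term $(1-\alpha)\varepsilon_n = (1-\alpha)a_{n+1}\cdot(\varepsilon_n/a_{n+1})$ into the sum, extending its upper limit from $n-1$ to $n$. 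The mildest point requiring care is simply keeping the index bookkeeping consistent, but no genuine obstacle arises.
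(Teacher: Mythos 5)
Your proof is correct and follows essentially the same route as the paper: both take expectations in \eqref{eq:Jamesetal08modelDef3} to get the linear recurrence $E[S_{n+1}]=\gamma_n E[S_n]+(1-\alpha)\varepsilon_n$ and then solve it, the paper by invoking the closed-form solution of Lemma \ref{SchutzTrimper04Recursion} and you by the equivalent telescoping of $E[S_n]/a_n$ via $\gamma_n=a_{n+1}/a_n$. The index bookkeeping and base case $b_1=\beta$ check out, so nothing is missing.
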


\begin{proof} Solving the recursion
\[ E[S_1]=\beta \quad \mbox{and} \quad E[S_{n+1}] = \gamma_n E[S_n] + (1-\alpha) \varepsilon_n \]
by Lemma \ref{SchutzTrimper04Recursion}, we have
\begin{align*}
E[S_n] = \beta \prod_{k=1}^{n-1} \gamma_k + (1-\alpha) \sum_{\ell=1}^{n-1} \varepsilon_{\ell} \prod_{k=\ell+1}^{n-1} \gamma_k
= \beta a_n + (1-\alpha) \sum_{\ell=1}^{n-1} \varepsilon_{\ell} \cdot \dfrac{a_n}{a_{\ell+1}}.
\end{align*}
\end{proof}


\begin{proof}[Proof of Theorem \ref{thm:Bercu18sLLN}] The proof of \eqref{eq:Bercu18sLLNa}, where the strong law of large numbers for martingales (Theorem \ref{thm:HallHeyde80Thm215}) and Kronecker's lemma (Lemma \ref{lem:KroneckerHeyde}) are used, is almost the same as Theorem 1 in \cite{Collettietal17a}, and is omitted.
We turn to \eqref{eq:Bercu18sLLNb}. 
Since $\alpha<1$, \eqref{eq:ElephantRWanAsymp} implies that the first term in the right hand side of \eqref{eq:BERW171109E} is $o(n)$ as $n \to \infty$. Now we rewrite the second term in the right hand side of \eqref{eq:BERW171109E} as 
\begin{align*}
(1-\alpha) a_n \left(\sum_{\ell=1}^{n-1} \dfrac{1}{a_{\ell+1}}\right) \cdot \dfrac{\sum_{\ell=1}^{n-1}\frac{\varepsilon_{\ell}}{a_{\ell+1}}}{\sum_{\ell=1}^{n-1} \frac{1}{a_{\ell+1}}} .
\end{align*}
Noting that 
\begin{align*}
(1-\alpha) a_n \left(\sum_{\ell=1}^{n-1} \dfrac{1}{a_{\ell+1}}\right) \sim (1-\alpha) \cdot \dfrac{n^{\alpha}}{\Gamma(\alpha+1)} \cdot \Gamma(\alpha+1) \cdot \dfrac{n^{1-\alpha}}{1-\alpha} = n
\end{align*}
as $n \to \infty$, and that $\displaystyle \sum_{\ell=1}^{\infty} \dfrac{1}{a_{\ell+1}}=+\infty$ since $\alpha<1$. If $\displaystyle \lim_{n \to \infty} \varepsilon_n =\varepsilon$，then Lemma \ref{lem:StolzCesaro} implies that
\begin{align*}
\lim_{n \to \infty} \dfrac{\sum_{\ell=1}^{n-1}\frac{\varepsilon_{\ell}}{a_{\ell+1}}}{\sum_{\ell=1}^{n-1} \frac{1}{a_{\ell+1}}} = \varepsilon.
\end{align*}
Thus we have $\displaystyle \lim_{n \to \infty} \dfrac{E[S_n]}{n} =\varepsilon$.
\end{proof}

\begin{proof}[Proof of Theorem \ref{thm:Jamesetal08modelMainSubcrit}] By Theorem \ref{thm:Bercu18sLLN} and \eqref{eq:Jamesetal08modelDef2}, we have
\begin{align*}
E[X_{n+1} \mid \mathcal{F}_n] = \alpha \cdot \dfrac{S_n}{n} + (1-\alpha) \varepsilon_n \to \alpha \varepsilon + (1-\alpha) \varepsilon =  \varepsilon \quad \mbox{as $n \to \infty$.}
\end{align*}
This together with \eqref{eq:ElephantRWmartdiff2} implies that
\begin{align}
E\left[ (d_k)^2 \mid \mathcal{F}_{k-1} \right] \sim \dfrac{1-\varepsilon^2}{(a_k)^2} \quad \mbox{as $k \to \infty$}. \label{eq:Jamesetal08modelMainSubcrit1}
\end{align}
By \eqref{eq:ElephantRWanAsymp}, if $\alpha \leq 1/2$, then
\begin{align*}
 \sum_{k=1}^n E\left[ (d_k)^2 \mid \mathcal{F}_{k-1} \right] \sim (1-\varepsilon^2)\Gamma(\alpha+1)^2 \sum_{k=1}^n \dfrac{1}{k^{2\alpha}} 
\end{align*}
as $n \to \infty$. The right hand side is
\begin{align*}
\sim \begin{cases}
(1-\varepsilon^2)\Gamma(\alpha+1)^2 \cdot \dfrac{n^{1-2\alpha}}{1-2\alpha} \sim \dfrac{1-\varepsilon^2}{1-2\alpha}n \cdot \dfrac{1}{(a_n)^2}&(\alpha<1/2), \\
(1-\varepsilon^2)\Gamma\left(\dfrac{3}{2} \right)^2\cdot \log n \sim(1-\varepsilon^2) n \log n\cdot \dfrac{1}{(a_n)^2}&(\alpha=1/2) \\
\end{cases}
\end{align*}
as $n \to \infty$.
Thus Theorem \ref{thm:Jamesetal08modelMainSubcrit} (i) and (ii) follow from Lemmas 3.4 and 3.5 in \cite{Jamesetal08}.
Now we consider the case $1/2<\alpha<1$.
By \eqref{eq:Jamesetal08modelMainSubcrit1} and the bounded convergence theorem, we have
\[
E[(d_k)^2] \sim \dfrac{1-\varepsilon^2}{(a_k)^2}\quad \mbox{as $n \to \infty$.}
\]
In view of \eqref{eq:ElephantRWanAsymp}, we have $\displaystyle \sum_{k=1}^{\infty} E[(d_k)^2] < +\infty$ when $1/2<\alpha<1$. Theorem \ref{thm:Heyde77Theorem1b} (i) implies that
\[
W := \sum_{k=1}^{\infty}d_k=\lim_{n \to \infty} M_n =  \lim_{n \to \infty} \dfrac{S_n-E[S_n]}{a_n}  
\]
exists with probability one, and since $M_n \stackrel{L^2}{\to} W$, we have
\[ E[W]=0,\quad \mbox{and} \quad E[W^2] = \sum_{k=1}^{\infty} E[(d_k)^2] > 0. \]
This completes the proof.
\end{proof}

\begin{proof}[Proof of Theorem \ref{thm:KubotaTakeiSupercriticalCLTLIL}] 
We check the conditions of Theorem \ref{thm:Heyde77Theorem1b} (ii) and (iii) are satisfied.
When $\alpha > 1/2$,
\begin{align*}
 V_n^2&:= \sum_{k=n}^{\infty} E\left[ (d_k)^2 \mid \mathcal{F}_{k-1} \right] \\
 &\sim (1-\varepsilon^2)\Gamma(\alpha+1)^2 \sum_{k=n}^{\infty} \dfrac{1}{k^{2\alpha}}=\dfrac{(1-\varepsilon^2)\Gamma(\alpha+1)^2}{2\alpha-1} \cdot \dfrac{1}{n^{2\alpha-1}} \\
 &\sim \dfrac{1-\varepsilon^2}{2\alpha-1}n \cdot \dfrac{1}{(a_n)^2} \quad \mbox{a.s.,} 
\end{align*}
and
\begin{align*}
 s_n^2&:=\sum_{k=n}^{\infty} E[(d_k)^2] \sim \dfrac{1-\varepsilon^2}{2\alpha-1}n \cdot \dfrac{1}{(a_n)^2}
\end{align*}
as $n  \to \infty$. 
This implies that 
\begin{align}
\lim_{n \to \infty} \dfrac{V_n^2}{s_n^2}=1 \quad \mbox{a.s..} \label{eq:KubotaTakeiSupercriticalCLTLILV_nLLN}
\end{align}
Noting that
\[ s_n^4 \sim  \dfrac{\Gamma(\alpha+1)^4}{(2\alpha -1)^2} \cdot \dfrac{(1-\varepsilon^2)^2}{n^{4\alpha-2}},  \]
and
\begin{align*}
(d_n)^4 
\leq \dfrac{16}{(a_n)^4}
\sim \dfrac{16\Gamma(\alpha+1)^4}{n^{4\alpha}}
\end{align*}
as $n  \to \infty$, we obtain
\[ 
\sum_{n=1}^{\infty} \dfrac{1}{s_n^4} E[(d_n)^4 \mid \mathcal{F}_{n-1}] \leq C_1 \sum_{n=1}^{\infty} \dfrac{1}{n^2}<+\infty.
\]
Theorem \ref{thm:HallHeyde80Thm215} shows that
\[
\sum_{k=1}^{\infty} \dfrac{1}{s_k^2} \{ (d_k)^2 - E[(d_k)^2 \mid \mathcal{F}_{k-1}]\}<+\infty \quad \mbox{a.s..}
\]
By Lemma \ref{lem:KroneckerHeyde} (ii),
\[
\lim_{n \to \infty} \dfrac{1}{s_n^2}\sum_{k=n}^{\infty} \{ (d_k)^2 - E[(d_k)^2 \mid \mathcal{F}_{k-1}]\} = 0 \quad \mbox{a.s..}
\]
This together with \eqref{eq:KubotaTakeiSupercriticalCLTLILV_nLLN} shows that conditions a) and a') are satisfied. 
For $\varepsilon>0$, since
\[
E[(d_k)^2 : |d_k| > \varepsilon s_n] \leq \dfrac{1}{\varepsilon^2 s_n^2} E[(d_k)^4],
\]
we have
\begin{align*}
\dfrac{1}{s_n^2} \sum_{k=n}^{\infty} E[(d_k)^2 : |d_k| > \varepsilon s_n] &\leq \dfrac{1}{\varepsilon^2 s_n^4} \sum_{k=n}^{\infty} E[(d_k)^4] \\
&\leq C_2 n^{4\alpha-2} \cdot n^{1-4\alpha} = \dfrac{C_2}{n} \to 0\quad \mbox{as $n \to \infty$}.
\end{align*}
In view of Remark \ref{rem:Heyde77Theorem1b}, condition b) is also satisfied. Similarly, for $\varepsilon>0$ we have
\begin{align*}
\dfrac{1}{s_k}E[|d_k| : |d_k| > \varepsilon s_k] &\leq \dfrac{1}{s_k} \cdot \dfrac{1}{\varepsilon^3 s_k^3} E[(d_k)^4] \\
&\leq C_3 k^{4\alpha-2} \cdot k^{-4\alpha} = \dfrac{C_3}{k^2},
\end{align*}
which implies that condition c) holds. Condition d) is implied by
\[
\displaystyle \sum_{n=1}^{\infty} \dfrac{1}{s_n^4} E[(d_n)^4]<+\infty.
\]
The desired conclusion follows from
\[ W-M_n = \dfrac{a_n \cdot W - (S_n-E[S_n])}{a_n}.  \]
\end{proof}

\section{Elephant random walk with polynomially decaying bias} \label{sec:ProofERWbias}

In this section we assume that $\varepsilon_n =n^{-\gamma}$ with $\gamma > 0$.
Since
\[ \dfrac{\varepsilon_{\ell}}{a_{\ell+1}} \sim \Gamma(\alpha + 1) \ell^{-(\gamma+\alpha)} \quad \mbox{as $\ell \to \infty$}, \]
the critical line for the asymptotic behavior of $E[S_n]$ is $\gamma+\alpha=1$:

\begin{lemma} \label{lem:171113biasedRWESn} Assume that $\varepsilon_n =n^{-\gamma}$ with $\gamma > 0$. As $n \to \infty$,
\begin{align*}
E[S_n] \sim \begin{cases}
C(\alpha,\beta,\gamma) \cdot n^{\alpha} &(\gamma>1-\alpha), \\
(1-\alpha)n^{\alpha} \log n = \gamma n^{1-\gamma} \log n &(\gamma=1-\alpha), \\
\dfrac{1-\alpha}{1-\gamma-\alpha} n^{1-\gamma} &(\gamma<1-\alpha),
\end{cases}
\end{align*}
where $C(\alpha,\beta,\gamma)$ is a constant larger than $\frac{\beta}{\Gamma(\alpha+1)}$.
\end{lemma}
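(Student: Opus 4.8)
The plan is to start from the exact formula \eqref{eq:BERW171109E} of Lemma \ref{lem:BERW171109},
\[ E[S_n] = \beta a_n + (1-\alpha) a_n \sum_{\ell=1}^{n-1} \dfrac{\varepsilon_{\ell}}{a_{\ell+1}}, \]
and to read off the three regimes from the behavior of the partial sums $T_n := \sum_{\ell=1}^{n-1} \varepsilon_{\ell}/a_{\ell+1}$. The only analytic input is the termwise estimate $\varepsilon_{\ell}/a_{\ell+1} \sim \Gamma(\alpha+1)\,\ell^{-(\gamma+\alpha)}$ recorded just before the statement (which itself follows from \eqref{eq:ElephantRWanAsymp}). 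This reduces the whole problem to the classical asymptotics of $\sum_{\ell=1}^{n-1}\ell^{-(\gamma+\alpha)}$, whose qualitative change at the exponent $\gamma+\alpha=1$ is exactly the critical line $\gamma=1-\alpha$ claimed in the lemma.

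In the regime $\gamma>1-\alpha$ one has $\gamma+\alpha>1$, so $\sum_{\ell}\ell^{-(\gamma+\alpha)}<\infty$ and hence, by comparison, $T_n$ converges to $L := \sum_{\ell=1}^{\infty}\varepsilon_{\ell}/a_{\ell+1}\in(0,\infty)$; positivity is immediate because every summand is strictly positive. Combining this with \eqref{eq:ElephantRWanAsymp} gives
\[ E[S_n] \sim \bigl(\beta + (1-\alpha)L\bigr)\,a_n \sim \dfrac{\beta + (1-\alpha)L}{\Gamma(\alpha+1)}\,n^{\alpha}, \]
so that $C(\alpha,\beta,\gamma) = \bigl(\beta + (1-\alpha)L\bigr)/\Gamma(\alpha+1)$. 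Since $\alpha<1$ and $L>0$, the correction term $(1-\alpha)L$ is strictly positive, which yields the strict inequality $C(\alpha,\beta,\gamma)>\beta/\Gamma(\alpha+1)$ asserted in the statement.

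In the two remaining regimes the series $\sum_{\ell}\ell^{-(\gamma+\alpha)}$ diverges, so I would transfer the termwise asymptotics to the partial sums exactly as in the proof of Theorem \ref{thm:Bercu18sLLN}: by Lemma \ref{lem:StolzCesaro} (the Stolz--Ces\`aro ratio applied to $T_n$ against $\sum_{\ell=1}^{n-1}\Gamma(\alpha+1)\ell^{-(\gamma+\alpha)}$), one gets $T_n \sim \Gamma(\alpha+1)\sum_{\ell=1}^{n-1}\ell^{-(\gamma+\alpha)}$. For $\gamma=1-\alpha$ the exponent is $1$, so $\sum_{\ell=1}^{n-1}\ell^{-1}\sim\log n$ and therefore $(1-\alpha)a_n T_n \sim (1-\alpha)n^{\alpha}\log n = \gamma n^{1-\gamma}\log n$; for $\gamma<1-\alpha$ the exponent is $<1$, so $\sum_{\ell=1}^{n-1}\ell^{-(\gamma+\alpha)}\sim n^{1-\gamma-\alpha}/(1-\gamma-\alpha)$ and thus $(1-\alpha)a_n T_n \sim \tfrac{1-\alpha}{1-\gamma-\alpha}\,n^{1-\gamma}$. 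In both cases the leading term of $E[S_n]$ comes from the second summand, and the first term $\beta a_n = O(n^{\alpha})$ is negligible, being $o(n^{\alpha}\log n)$ when $\gamma=1-\alpha$ and $o(n^{1-\gamma})$ when $\gamma<1-\alpha$ (here $1-\gamma>\alpha$).

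I do not expect a serious obstacle: the argument is essentially a careful bookkeeping of the three regimes, and the machinery (the closed form \eqref{eq:BERW171109E}, the termwise estimate, and Lemma \ref{lem:StolzCesaro}) is already in place. The only points requiring genuine care are the bookkeeping ones, namely checking that the passage from the termwise estimate to the partial-sum asymptotics is legitimate in the convergent case (comparison) versus the divergent cases (Stolz--Ces\`aro), verifying that $\beta a_n$ is indeed of lower order in the two subcritical regimes, and confirming the strict inequality $C(\alpha,\beta,\gamma)>\beta/\Gamma(\alpha+1)$, which hinges only on the strict positivity of $L$.
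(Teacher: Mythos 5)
Your proposal is correct and follows essentially the same route as the paper: both start from the closed form \eqref{eq:BERW171109E}, use the termwise estimate $\varepsilon_{\ell}/a_{\ell+1} \sim \Gamma(\alpha+1)\ell^{-(\gamma+\alpha)}$, and split at $\gamma+\alpha=1$ into a convergent-series case (giving $C(\alpha,\beta,\gamma)n^{\alpha}$ with the strict inequality from positivity of the summands) and divergent cases where the second term dominates. Your explicit invocation of Lemma \ref{lem:StolzCesaro} to pass from termwise to partial-sum asymptotics is exactly the step the paper performs implicitly.
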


\begin{proof} When $\gamma+\alpha>1$, noting that $\displaystyle \sum_{\ell=1}^{\infty} \dfrac{\varepsilon_{\ell}}{a_{\ell+1}}<+\infty$, Lemma \ref{lem:BERW171109} implies that
\begin{align*}
E[S_n] 
&\sim \dfrac{\beta  + (1-\alpha) \sum_{\ell=1}^{\infty} \frac{\varepsilon_{\ell}}{a_{\ell+1}}}{\Gamma(\alpha+1)} \cdot n^{\alpha} =:  C(\alpha,\beta,\gamma) \cdot n^{\alpha} 
\end{align*}
as $n \to \infty$. On the other hand, if $\gamma+\alpha \leq 1$, then
the second term in the right hand side of the equation \eqref{eq:BERW171109E} in Lemma \ref{lem:BERW171109} is dominant as $n \to \infty$, and we have
\begin{align*}
E[S_n] &\sim (1-\alpha)a_n  \sum_{\ell=1}^{n-1} \dfrac{\varepsilon_{\ell}}{a_{\ell+1}} \\
&\sim \dfrac{1-\alpha}{\Gamma(\alpha+1)}n^{\alpha} \cdot  \begin{cases}
\Gamma(\alpha+1) \log n &(\gamma+\alpha = 1), \\
\dfrac{\Gamma(\alpha+1)}{1-(\gamma+\alpha)} n^{1-(\gamma+\alpha)} &(\gamma+\alpha<1). \\
\end{cases} 
\end{align*}
This completes the proof. 
\end{proof}

Now we consider the cases where the effect of bias is weaker. 
Suppose that $\gamma>1/2$ and $\alpha \leq 1/2$.
By Lemma \ref{lem:171113biasedRWESn}, we can see that
\begin{align*}
E[S_n]=\begin{cases}
o(\sqrt{n}) &(\alpha<1/2), \\
o(\sqrt{n\log n}) &(\alpha=1/2) \\
\end{cases}
\quad \mbox{as $n \to \infty$.}
\end{align*}
This together with Theorem \ref{thm:Jamesetal08modelMainSubcrit} (i) and (ii) gives Theorem \ref{thm:ElephantPolynomDecay} (i) a) and b).
When $\alpha>1/2$ and $\gamma+\alpha>1$, Theorem \ref{thm:Jamesetal08modelMainSubcrit} (iii) and Lemma \ref{lem:171113biasedRWESn} shows that $S_n/n^{\alpha} \to \widehat{W}$ a.s. and in $L^2$, where $E[\widehat{W}]=C(\alpha,\beta,\gamma)$.
This implies part c) of Theorem \ref{thm:Jamesetal08modelMainSubcrit} (i), (ii) and (iii).
As for the case $\alpha < 1/2$ and $\gamma = 1/2$, Lemma \ref{lem:171113biasedRWESn} implies that
\begin{align*}
E[S_n] \sim  \dfrac{1-\alpha}{1-\tfrac{1}{2}-\alpha} n^{1/2} = \dfrac{2-2\alpha}{1-2\alpha} \sqrt{n} \quad \mbox{as $n \to \infty$},
\end{align*}
and Theorem \ref{thm:ElephantPolynomDecay} (ii) a) follows from Theorem \ref{thm:Jamesetal08modelMainSubcrit} (i).

Next we analyze the asymptotic behavior of $E[(S_n)^2]$ to prove Theorem \ref{thm:ElephantPolynomDecay} (i) a) and b).
By \eqref{eq:Jamesetal08modelDef}, we obtain
\begin{align*}
E[(S_{n+1})^2 \mid \mathcal{F}_n] 
&=(S_n)^2+2S_n \cdot E[X_{n+1} \mid \mathcal{F}_n]+ E[(X_{n+1})^2\mid \mathcal{F}_n] \\
&= \gamma'_n(S_n)^2+2(1-\alpha) \varepsilon_n S_n + 1,
\end{align*}
and
\[ E[(S_{n+1})^2] = \gamma'_n E[(S_n)^2]+2(1-\alpha) \varepsilon_n  E[S_n] + 1, \]
where $\gamma'_n:=1+\dfrac{2\alpha}{n}$. Noting that $E[(S_1)^2]=1$, Lemma \ref{SchutzTrimper04Recursion} implies that
\begin{align}
 E[(S_n)^2] &=  \sum_{\ell=0}^{n-1} \{ 2(1-\alpha) \varepsilon_\ell  E[S_\ell] + 1 \} \cdot \prod_{k=\ell+1}^{n-1} \gamma'_k \notag \\
 &= a'_n \sum_{\ell=0}^{n-1} \dfrac{1}{a'_{\ell+1}} + 2(1-\alpha) a'_n \sum_{\ell=0}^{n-1} \dfrac{\varepsilon_\ell  E[S_\ell]}{a'_{\ell+1}} .\label{eq:biasdecayESn2}
\end{align}
The first term in the right hand side is the second moment of $S_n$ for the elephant random walk with $\varepsilon_n \equiv 0$.
\begin{lemma} \label{lem:190306biasdecayESn2-3} If $\alpha \leq 1/2$ and $\gamma < 1/2$, or if $\alpha>1/2$ and $\gamma < 1-\alpha$，then $E[(S_n)^2] \sim (E[S_n])^2$ as $n \to \infty$.
\end{lemma}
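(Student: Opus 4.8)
The plan is to extract everything from the explicit second-moment formula \eqref{eq:biasdecayESn2},
\[
E[(S_n)^2] = a'_n \sum_{\ell=0}^{n-1} \frac{1}{a'_{\ell+1}} + 2(1-\alpha) a'_n \sum_{\ell=0}^{n-1} \frac{\varepsilon_\ell E[S_\ell]}{a'_{\ell+1}},
\]
where $a'_n \sim n^{2\alpha}/\Gamma(2\alpha+1)$, and to show that the second (bias-carrying) sum alone produces the full asymptotics $(E[S_n])^2$ while the first sum is negligible. First I would note that both hypotheses force $\gamma < 1-\alpha$: when $\alpha \le 1/2$ and $\gamma < 1/2$ one has $\gamma < 1/2 \le 1-\alpha$, and when $\alpha > 1/2$ the inequality $\gamma < 1-\alpha$ is assumed outright. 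Hence Lemma \ref{lem:171113biasedRWESn} applies in its third regime, giving $E[S_n] \sim \frac{1-\alpha}{1-\gamma-\alpha}\, n^{1-\gamma}$, so the target is $(E[S_n])^2 \sim \left(\frac{1-\alpha}{1-\gamma-\alpha}\right)^2 n^{2-2\gamma}$.

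Next I would dispose of the first term. As remarked after \eqref{eq:biasdecayESn2}, it is exactly the second moment of the driftless walk ($\varepsilon_n \equiv 0$), whose order is $n$ for $\alpha<1/2$, $n\log n$ for $\alpha=1/2$, and $n^{2\alpha}$ for $\alpha>1/2$. In every case this is $o(n^{2-2\gamma})$: for $\alpha \le 1/2$ because $2-2\gamma>1$, and for $\alpha>1/2$ because $\gamma<1-\alpha$ is precisely $2\alpha<2-2\gamma$. Thus the first term is negligible against $(E[S_n])^2$, and the whole contribution must come from the second term.

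The heart of the argument is then the second term. Using Lemma \ref{lem:171113biasedRWESn} together with $a'_{\ell+1}\sim \ell^{2\alpha}/\Gamma(2\alpha+1)$, its summand satisfies
\[
\frac{\varepsilon_\ell E[S_\ell]}{a'_{\ell+1}} \sim \frac{(1-\alpha)\Gamma(2\alpha+1)}{1-\gamma-\alpha}\,\ell^{\,1-2\gamma-2\alpha}.
\]
Since $\gamma+\alpha<1$ the exponent exceeds $-1$, so the partial sums diverge, and a power-sum / Stolz--Ces\`{a}ro estimate (Lemma \ref{lem:StolzCesaro}) yields $\sum_{\ell=0}^{n-1}\frac{\varepsilon_\ell E[S_\ell]}{a'_{\ell+1}} \sim \frac{(1-\alpha)\Gamma(2\alpha+1)}{(1-\gamma-\alpha)(2-2\gamma-2\alpha)}\, n^{2-2\gamma-2\alpha}$. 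Multiplying by $2(1-\alpha)a'_n \sim 2(1-\alpha)n^{2\alpha}/\Gamma(2\alpha+1)$ and using the identity $2-2\gamma-2\alpha=2(1-\gamma-\alpha)$ collapses the constant to $\left(\frac{1-\alpha}{1-\gamma-\alpha}\right)^2$, so the second term is asymptotic to $(E[S_n])^2$. Combining the two estimates gives $E[(S_n)^2]\sim (E[S_n])^2$.

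I expect the only real obstacle to be bookkeeping rather than ideas: one must check that the constant produced by the power-sum asymptotics for the second term cancels exactly, through $2-2\gamma-2\alpha=2(1-\gamma-\alpha)$, to the square of the leading constant of $E[S_n]$, and that the exponent condition $\gamma+\alpha<1$ required to invoke Lemma \ref{lem:StolzCesaro} is guaranteed in both regimes (it is, being equivalent to $\gamma<1-\alpha$). No probabilistic input beyond the moment recursion is needed.
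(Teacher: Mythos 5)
Your proposal is correct and follows essentially the same route as the paper: reduce both hypotheses to $\gamma+\alpha<1$, show the bias term in \eqref{eq:biasdecayESn2} has summand $\sim \frac{(1-\alpha)\Gamma(2\alpha+1)}{1-\gamma-\alpha}\,\ell^{1-2(\gamma+\alpha)}$, sum the power, and check that the constants collapse to $\bigl(\frac{1-\alpha}{1-\gamma-\alpha}\bigr)^2 n^{2-2\gamma}\sim (E[S_n])^2$. The only difference is that you spell out why the first (driftless) term is negligible in each of the three sub-regimes, whereas the paper simply asserts its dominance; that is a harmless and slightly more careful elaboration, not a different argument.
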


\begin{proof} Since $\gamma+\alpha<1$ in both cases, 
Lemma \ref{lem:171113biasedRWESn} shows that
\begin{align*}
\dfrac{\varepsilon_{\ell}  E[S_\ell]}{a'_{\ell+1}} 
&\sim \ell^{-\gamma} \cdot  \dfrac{1-\alpha}{1-\gamma-\alpha} \ell^{1-\gamma}  \left/ \dfrac{\ell^{2\alpha}}{\Gamma(2\alpha+1)} \right. \\
&= \dfrac{1-\alpha}{1-\gamma-\alpha} \cdot \Gamma(2\alpha+1) \ell^{1-2(\gamma+\alpha)}\quad\mbox{as $\ell \to \infty$.}
\end{align*}
Since the second term in the right hand side of \eqref{eq:biasdecayESn2} is dominant as $n \to \infty$, we have
\begin{align*}
 E[(S_n)^2] &\sim 2(1-\alpha) a'_n \sum_{\ell=0}^{n-1} \dfrac{\varepsilon_\ell  E[S_\ell]}{a'_{\ell+1}} \\
 &\sim 2(1-\alpha) \cdot \dfrac{n^{2\alpha}}{\Gamma(2\alpha+1)} \cdot  \dfrac{1-\alpha}{1-\gamma-\alpha} \cdot \dfrac{\Gamma(2\alpha+1)}{2(1-\gamma-\alpha)} \cdot n^{2(1-\gamma-\alpha)} \\
 &= \left( \dfrac{1-\alpha}{1-\gamma-\alpha} n^{1-\gamma} \right)^2 \sim (E[S_n])^2.
\end{align*}
\end{proof}

\begin{corollary} \label{cor:190306biasdecayESn2-3} Under the condition of Lemma \ref{lem:190306biasdecayESn2-3}, 
$\dfrac{S_n}{n^{1-\gamma}} \stackrel{L^2}{\to} \dfrac{1-\alpha}{1-\gamma-\alpha}$.
\end{corollary}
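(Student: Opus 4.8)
The plan is to deduce the $L^2$-convergence directly from the two asymptotic estimates already in hand: the first-moment asymptotics of Lemma~\ref{lem:171113biasedRWESn} and the second-moment asymptotics of Lemma~\ref{lem:190306biasdecayESn2-3}. Since $\gamma+\alpha<1$ in both regimes covered by Lemma~\ref{lem:190306biasdecayESn2-3}, the relevant branch of Lemma~\ref{lem:171113biasedRWESn} yields $E[S_n] \sim \frac{1-\alpha}{1-\gamma-\alpha}\,n^{1-\gamma}$. Writing $c:=\frac{1-\alpha}{1-\gamma-\alpha}$ for the target constant, this is precisely $E[S_n]/n^{1-\gamma}\to c$.

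First I would expand the squared $L^2$-distance to the constant $c$:
\[ E\!\left[\left(\frac{S_n}{n^{1-\gamma}}-c\right)^2\right] = \frac{E[(S_n)^2]}{n^{2(1-\gamma)}} - 2c\cdot\frac{E[S_n]}{n^{1-\gamma}} + c^2. \]
The middle term tends to $-2c^2$ by the first-moment asymptotics. For the leading term I would factor
\[ \frac{E[(S_n)^2]}{n^{2(1-\gamma)}} = \frac{E[(S_n)^2]}{(E[S_n])^2}\cdot\left(\frac{E[S_n]}{n^{1-\gamma}}\right)^2, \]
and invoke Lemma~\ref{lem:190306biasdecayESn2-3}, which states $E[(S_n)^2]\sim (E[S_n])^2$, so the first factor converges to $1$ and the second to $c^2$; hence the leading term tends to $c^2$. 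Summing the three contributions gives $c^2-2c^2+c^2=0$, which is exactly $S_n/n^{1-\gamma}\stackrel{L^2}{\to}c$.

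There is essentially no obstacle beyond this bookkeeping: all of the analytic work has been absorbed into the two preceding lemmas, and the corollary reduces to the elementary fact that $L^2$-convergence of a sequence to a constant is equivalent to convergence of its first moment to that constant together with convergence of its second moment to the square of the constant. The only step meriting a moment's care is the factorization of $E[(S_n)^2]/n^{2(1-\gamma)}$ into the ratio controlled by Lemma~\ref{lem:190306biasdecayESn2-3} times the square of the quantity controlled by Lemma~\ref{lem:171113biasedRWESn}; once this is arranged the limit is forced and no additional estimates are needed.
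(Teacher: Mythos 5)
Your argument is correct and is essentially identical to the paper's own proof, which likewise expands $E\bigl[(S_n/n^{1-\gamma}-c)^2\bigr]$ into first- and second-moment terms and feeds in the asymptotics $E[S_n]\sim c\,n^{1-\gamma}$ from Lemma \ref{lem:171113biasedRWESn} and $E[(S_n)^2]\sim(E[S_n])^2$ from Lemma \ref{lem:190306biasdecayESn2-3}. No further comment is needed.
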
 

\begin{proof} This follows from
\begin{align*}
E\left[ \left( \dfrac{S_n}{n^{1-\gamma}} - \dfrac{1-\alpha}{1-\gamma-\alpha}\right)^2 \right] &= \dfrac{E[(S_n)^2]}{n^{2(1-\gamma)}} - 2 \cdot \dfrac{1-\alpha}{1-\gamma-\alpha} \cdot \dfrac{E[S_n]}{n^{1-\gamma}} + \left( \dfrac{E[S_n]}{n^{1-\gamma}} \right)^2.
\end{align*}
\end{proof}

\begin{lemma} \label{lem:190306biasdecayESn2-6} If $\alpha \geq 1/2$ and $\gamma = 1-\alpha$, then $E[(S_n)^2] \sim (E[S_n])^2$ as $n \to \infty$.
\end{lemma}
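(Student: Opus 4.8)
The plan is to exploit the exact expression \eqref{eq:biasdecayESn2} for $E[(S_n)^2]$ and to show that its second term alone already produces the full asymptotics $(E[S_n])^2$, while the first term is of strictly smaller order. Since $\gamma=1-\alpha$, Lemma \ref{lem:171113biasedRWESn} gives $E[S_n] \sim (1-\alpha) n^{\alpha} \log n$, so the target is
\[ (E[S_n])^2 \sim (1-\alpha)^2 n^{2\alpha} (\log n)^2 \quad \mbox{as $n \to \infty$.} \]
Thus it suffices to prove that the second term in \eqref{eq:biasdecayESn2} is asymptotic to this quantity and that the first term is $o(n^{2\alpha}(\log n)^2)$.

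For the second term, I would substitute the asymptotics $E[S_\ell] \sim (1-\alpha)\ell^{\alpha}\log\ell$, together with $\varepsilon_\ell = \ell^{-(1-\alpha)} = \ell^{\alpha-1}$ and $a'_{\ell+1} \sim \ell^{2\alpha}/\Gamma(2\alpha+1)$, to obtain
\[ \dfrac{\varepsilon_\ell E[S_\ell]}{a'_{\ell+1}} \sim (1-\alpha)\Gamma(2\alpha+1) \dfrac{\log\ell}{\ell} \quad \mbox{as $\ell \to \infty$.} \]
Because the right-hand side is the general term of a divergent series of positive terms, the partial sums inherit this equivalence, and comparison with $\int_2^n \frac{\log x}{x}\,dx = \frac{1}{2}\{(\log n)^2 - (\log 2)^2\}$ (alternatively Lemma \ref{lem:StolzCesaro}) yields
\[ \sum_{\ell=0}^{n-1} \dfrac{\varepsilon_\ell E[S_\ell]}{a'_{\ell+1}} \sim (1-\alpha)\Gamma(2\alpha+1) \dfrac{(\log n)^2}{2}. \]
Multiplying by $2(1-\alpha) a'_n \sim 2(1-\alpha) n^{2\alpha}/\Gamma(2\alpha+1)$, the two factors of $\Gamma(2\alpha+1)$ cancel, and the second term is asymptotic to $(1-\alpha)^2 n^{2\alpha}(\log n)^2$, which is exactly $(E[S_n])^2$.

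It then remains to dispose of the first term $a'_n \sum_{\ell=0}^{n-1} (a'_{\ell+1})^{-1}$, which is the second moment of the driftless ($\varepsilon_n \equiv 0$) elephant random walk. By the mean square displacement asymptotics recalled in the introduction, this is $\sim \frac{1}{(2\alpha-1)\Gamma(2\alpha)}n^{2\alpha}$ when $\alpha>1/2$ and $\sim n\log n$ when $\alpha=1/2$; in both cases it is $o(n^{2\alpha}(\log n)^2)$ (for $\alpha=1/2$ the target is $\frac{1}{4} n(\log n)^2$, against which $n\log n$ is negligible). Combining the two estimates gives $E[(S_n)^2] \sim (1-\alpha)^2 n^{2\alpha}(\log n)^2 \sim (E[S_n])^2$, as claimed.

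The only delicate point I anticipate is the passage from the term-wise equivalence $\frac{\varepsilon_\ell E[S_\ell]}{a'_{\ell+1}} \sim (1-\alpha)\Gamma(2\alpha+1)\frac{\log\ell}{\ell}$ to the corresponding equivalence of partial sums, together with the evaluation $\sum_{\ell=2}^n \frac{\log\ell}{\ell} \sim \frac{(\log n)^2}{2}$. This is routine via integral comparison, but it is precisely the step where the logarithmic factor must be tracked carefully, and it is also the mechanism that makes the extra $\log$ in $E[S_n]$ square up correctly with the leading term of $E[(S_n)^2]$.
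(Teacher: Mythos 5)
Your proposal is correct and follows essentially the same route as the paper: both start from \eqref{eq:biasdecayESn2}, derive $\frac{\varepsilon_\ell E[S_\ell]}{a'_{\ell+1}} \sim (1-\alpha)\Gamma(2\alpha+1)\frac{\log\ell}{\ell}$ from Lemma \ref{lem:171113biasedRWESn}, sum to get $(1-\alpha)^2 n^{2\alpha}(\log n)^2 = (E[S_n])^2$, and observe that the first (driftless) term is of lower order. Your explicit verification that the first term is $o(n^{2\alpha}(\log n)^2)$ in both cases $\alpha>1/2$ and $\alpha=1/2$ just spells out a step the paper states without detail.
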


\begin{proof} If $\gamma+\alpha=1$, then Lemma \ref{lem:171113biasedRWESn} implies that
\begin{align*}
\dfrac{\varepsilon_{\ell}  E[S_\ell]}{a'_{\ell+1}} \sim \ell^{-\gamma} \cdot (1-\alpha)  \ell^{\alpha}  \log \ell \left/ \dfrac{\ell^{2\alpha}}{\Gamma(2\alpha+1)} \right. = (1-\alpha)\Gamma(2\alpha+1) \cdot \dfrac{\log \ell}{\ell}
\end{align*}
as $\ell \to \infty$. When $\alpha \geq 1/2$ and $\gamma+\alpha=1$, the second term in the right hand side of \eqref{eq:biasdecayESn2} is dominant as $n \to \infty$, and we have
\begin{align*}
 E[(S_n)^2] &\sim 2\left(1-\alpha\right) a'_n \sum_{\ell=0}^{n-1} \dfrac{\varepsilon_\ell  E[S_\ell]}{a'_{\ell+1}} \\
 &\sim 2\left(1-\alpha\right) \cdot \dfrac{n^{2\alpha}}{\Gamma(2\alpha+1)} \cdot  (1-\alpha)\Gamma(2\alpha+1)\sum_{\ell=1}^{n-1} \dfrac{\log \ell}{\ell} \\
 &\sim \{(1-\alpha) n^{\alpha} \log n\}^2  \sim (E[S_n])^2.
\end{align*}
\end{proof}

By the same argument as in Corollary \ref{cor:190306biasdecayESn2-3}, we have the following. 

\begin{corollary} \label{cor:190306biasdecayESn2-6} Under the same condition as in Lemma \ref{lem:190306biasdecayESn2-6}, $\dfrac{S_n}{n^{\alpha} \log n} \stackrel{L^2}{\to} 1-\alpha$. 
\end{corollary}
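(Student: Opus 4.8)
The plan is to mirror the short computation used in the proof of Corollary \ref{cor:190306biasdecayESn2-3}, replacing the scaling factor $n^{1-\gamma}$ by $n^{\alpha}\log n$ and the target constant $\frac{1-\alpha}{1-\gamma-\alpha}$ by $1-\alpha$. Concretely, I would expand the squared $L^2$-distance of $S_n/(n^{\alpha}\log n)$ from its candidate limit as
\[
E\left[ \left( \dfrac{S_n}{n^{\alpha}\log n} - (1-\alpha) \right)^2 \right] = \dfrac{E[(S_n)^2]}{n^{2\alpha}(\log n)^2} - 2(1-\alpha) \cdot \dfrac{E[S_n]}{n^{\alpha}\log n} + (1-\alpha)^2,
\]
and then show that the right-hand side tends to $0$ as $n \to \infty$ by reading off the moment asymptotics from the preceding lemmas.

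The two asymptotic inputs are already in hand. First, Lemma \ref{lem:171113biasedRWESn} specialized to $\gamma=1-\alpha$ gives $E[S_n] \sim (1-\alpha)n^{\alpha}\log n$, so the middle term converges to $-2(1-\alpha)^2$. Second, Lemma \ref{lem:190306biasdecayESn2-6} gives $E[(S_n)^2] \sim (E[S_n])^2 \sim (1-\alpha)^2 n^{2\alpha}(\log n)^2$, so the first term converges to $(1-\alpha)^2$. Adding the three contributions yields $(1-\alpha)^2 - 2(1-\alpha)^2 + (1-\alpha)^2 = 0$, which is exactly the asserted $L^2$-convergence.

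Since the genuinely substantial estimate — the second-moment asymptotic $E[(S_n)^2] \sim (E[S_n])^2$ — is already established in Lemma \ref{lem:190306biasdecayESn2-6}, there is no real obstacle remaining; the only point meriting attention is to confirm that the three limits cancel exactly. This cancellation hinges on the limiting constant in the first-moment asymptotic being the square root of that in the second-moment asymptotic, which is precisely the content of $E[(S_n)^2] \sim (E[S_n])^2$. Thus the cancellation is automatic, and the corollary follows.
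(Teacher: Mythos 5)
Your proposal is correct and is exactly the paper's argument: the paper proves this corollary by invoking ``the same argument as in Corollary \ref{cor:190306biasdecayESn2-3},'' namely expanding the squared $L^2$-distance and feeding in the moment asymptotics from Lemma \ref{lem:171113biasedRWESn} (with $\gamma=1-\alpha$) and Lemma \ref{lem:190306biasdecayESn2-6}. Nothing is missing.
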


This completes the proof of Theorem \ref{thm:ElephantPolynomDecay}.

\appendix

\section{}

\subsection{Lemmas from calculus}

\begin{lemma}[Sch\"{u}tz and Trimper \cite{SchutzTrimper04}, (12) and (13)] \label{SchutzTrimper04Recursion} The general solution $\{x_n\}$ to the recursion 
\[ \begin{cases} \hspace{4.3mm}x_1=f_0, \\
x_{n+1} = f_n+g_n \cdot x_n &[n=1,2,\cdots] \\
\end{cases} \]
is given by
\[ x_n = \sum_{\ell=0}^{n-1} f_\ell \cdot \prod_{k=\ell+1}^{n-1} g_k. \]
\end{lemma}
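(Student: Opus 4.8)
The plan is to prove the closed form by induction on $n$, using only the defining recursion together with the standard conventions that an empty product equals $1$ and an empty sum equals $0$.

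For the base case $n=1$, I would simply evaluate the claimed right-hand side: the sum $\sum_{\ell=0}^{0} f_\ell \prod_{k=\ell+1}^{0} g_k$ consists of the single term $\ell=0$, whose product $\prod_{k=1}^{0} g_k$ is empty and hence equal to $1$. This gives $f_0$, which matches the initial condition $x_1=f_0$.

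For the inductive step, I would assume the formula holds for some fixed $n\geq 1$ and apply the recursion $x_{n+1}=f_n+g_n x_n$. Substituting the inductive hypothesis yields
\[
x_{n+1}=f_n+g_n\sum_{\ell=0}^{n-1} f_\ell \prod_{k=\ell+1}^{n-1} g_k .
\]
The key algebraic observation is that absorbing the factor $g_n$ into each product extends its upper index by one, namely $g_n\prod_{k=\ell+1}^{n-1} g_k=\prod_{k=\ell+1}^{n} g_k$, so the sum becomes $\sum_{\ell=0}^{n-1} f_\ell \prod_{k=\ell+1}^{n} g_k$. It then remains to identify the leftover term $f_n$ with the missing $\ell=n$ summand of the target expression: indeed $f_n=f_n\prod_{k=n+1}^{n} g_k$, again by the empty-product convention. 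Combining the two pieces gives $x_{n+1}=\sum_{\ell=0}^{n} f_\ell \prod_{k=\ell+1}^{n} g_k$, which is exactly the claimed formula with $n$ replaced by $n+1$, completing the induction.

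There is no substantial obstacle here; the statement is an elementary solution of a first-order linear recursion. The only point requiring care is the bookkeeping at the boundaries of the index ranges, i.e. consistently treating the empty product as $1$ so that both the base case and the newly created $\ell=n$ term come out correctly. Everything else is a routine rearrangement of a single finite sum.
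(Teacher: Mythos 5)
Your induction is correct: the base case and the inductive step are both handled properly, with the empty-product convention doing exactly the work needed at the index boundaries. The paper itself gives no proof of this lemma (it is simply quoted from Sch\"{u}tz and Trimper), so your argument is the standard and entirely adequate verification.
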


\begin{lemma}[see e.g. Knopp \cite{Knopp56Dover}, p.34] \label{lem:StolzCesaro} If a real sequence $\{a_n\}$ and a positive sequence $\{b_n\}$ satisfy
\[ \lim_{n\to \infty} \dfrac{a_n}{b_n} = L \in \mathbb{R} \cup \{ \pm \infty \},\quad\mbox{and}\quad \sum_{n=1}^{\infty} b_n = +\infty, \]
then
\[ \lim_{n\to \infty} \dfrac{\sum_{k=1}^n a_k}{\sum_{k=1}^n b_k} = L. \]
\end{lemma}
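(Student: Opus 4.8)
The plan is to reduce everything to the elementary observation that, because $b_k>0$ and $\sum_k b_k=+\infty$, the partial sums $B_n:=\sum_{k=1}^n b_k$ form a strictly increasing sequence tending to $+\infty$; the contribution of any fixed initial block of terms is therefore negligible after division by $B_n$. Writing also $A_n:=\sum_{k=1}^n a_k$, I would treat the finite and infinite values of $L$ separately, in each case splitting the sum at a threshold $N$ chosen from the hypothesis on $a_k/b_k$ and then absorbing the fixed head into the divergent denominator.

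For the case $L\in\mathbb{R}$, fix $\varepsilon>0$ and choose $N$ so large that $|a_k/b_k-L|<\varepsilon$ for all $k>N$; since $b_k>0$ this is the same as $|a_k-Lb_k|<\varepsilon b_k$. Splitting at $N$, for $n>N$ I would write
\[
A_n-LB_n=\sum_{k=1}^{N}(a_k-Lb_k)+\sum_{k=N+1}^{n}(a_k-Lb_k),
\]
bound the first (fixed) sum in absolute value by a constant $C_N$, and estimate the tail by $\sum_{k=N+1}^n|a_k-Lb_k|<\varepsilon\sum_{k=N+1}^n b_k\le\varepsilon B_n$. Dividing through by $B_n>0$ gives $\left|A_n/B_n-L\right|\le C_N/B_n+\varepsilon$, and since $B_n\to+\infty$ the first term falls below $\varepsilon$ for all large $n$, yielding $\limsup_{n\to\infty}|A_n/B_n-L|\le 2\varepsilon$. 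As $\varepsilon$ is arbitrary, the claim follows.

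For $L=+\infty$, given $M>0$ I would pick $N$ with $a_k>2Mb_k$ for all $k>N$, so that for $n>N$,
\[
A_n>\sum_{k=1}^{N}a_k+2M(B_n-B_N)=\bigl(\textstyle\sum_{k=1}^N a_k-2MB_N\bigr)+2MB_n.
\]
Dividing by $B_n$ and letting $n\to\infty$ (again using $B_n\to+\infty$ to kill the bounded numerator) shows $A_n/B_n>M$ eventually, hence $A_n/B_n\to+\infty$. The case $L=-\infty$ then follows by applying the previous case to the sequence $\{-a_k\}$, whose ratios $(-a_k)/b_k$ tend to $+\infty$.

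The argument is entirely routine and presents no real obstacle; the only point requiring a little care is that in the finite-$L$ case one must not assume the $a_k$ are eventually of one sign. This is exactly why the estimate is carried out on $a_k-Lb_k$ rather than on $a_k$ directly, and why the tail is controlled by $\varepsilon b_k$ with $b_k>0$. Everything else is the standard device of absorbing a fixed head into a denominator that diverges.
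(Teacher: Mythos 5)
Your proof is correct and complete: the head/tail splitting at a threshold $N$, the estimate $|a_k - Lb_k| < \varepsilon b_k$ for the tail, and the absorption of the fixed head into the divergent denominator $B_n$ (with the separate treatment of $L = \pm\infty$) is precisely the standard argument, and it matches the classical proof in the reference the paper cites (Knopp). The paper itself offers no proof of this lemma --- it is quoted from the literature --- so there is nothing to compare beyond noting that your argument is the canonical one.
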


\begin{lemma} \label{lem:KroneckerHeyde} Consider a positive real sequence $\{a_n\}$ which monotonically diverges to $+\infty$, and another real sequence $\{b_n\}$.
\begin{itemize}
\item[(i)] (Kronecker's lemma) If $\displaystyle \sum_{k=1}^{\infty} \dfrac{b_k}{a_k}$ converges, then $\displaystyle \lim_{n \to \infty} \dfrac{1}{a_n}\sum_{k=1}^n b_k =0$.
\item[(ii)] (Heyde \cite{Heyde77}, Lemma 1 (ii)) If $\displaystyle \sum_{k=1}^{\infty} a_k b_k$ converges, then$\displaystyle \lim_{n \to \infty} a_n\sum_{k=n}^{\infty} b_k =0$.
\end{itemize}
\end{lemma}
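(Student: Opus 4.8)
The plan is to prove both parts by the same device, namely Abel summation (summation by parts), which converts each quantity into a Toeplitz-type weighted average of a \emph{convergent} sequence; I then invoke the elementary averaging principle that a weighted average of a sequence tending to a limit, taken with nonnegative weights whose total mass tends to the appropriate constant, tends to that same limit. Part (i) is the classical Kronecker lemma and part (ii) is its tail-sum dual, and the monotone divergence of $\{a_n\}$ is what makes the summation-by-parts coefficients nonnegative in both.

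For part (i), I would set $\sigma_n:=\sum_{k=1}^n \frac{b_k}{a_k}$, so that $\sigma_n$ converges to some $\sigma_\infty\in\mathbb{R}$ by hypothesis, and write $b_k=a_k(\sigma_k-\sigma_{k-1})$ with $\sigma_0:=0$. Summation by parts then gives
\[ \frac{1}{a_n}\sum_{k=1}^n b_k=\sigma_n-\frac{1}{a_n}\sum_{k=1}^{n-1}(a_{k+1}-a_k)\,\sigma_k. \]
Since $\{a_n\}$ is nondecreasing, the coefficients $(a_{k+1}-a_k)/a_n$ are nonnegative and sum to $(a_n-a_1)/a_n\to 1$, so the subtracted term is a weighted average of $\sigma_1,\dots,\sigma_{n-1}$ and converges to $\sigma_\infty$; the right-hand side therefore tends to $\sigma_\infty-\sigma_\infty=0$.

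For part (ii), which is dual, I would set $\tau_n:=\sum_{k=1}^n a_k b_k$, convergent to $\tau_\infty$ by hypothesis, put $R_k:=\tau_k-\tau_\infty\to 0$, and write $b_k=(R_k-R_{k-1})/a_k$. Summation by parts over a block $n\le k\le m$ yields
\[ \sum_{k=n}^{m} b_k=\frac{R_m}{a_m}-\frac{R_{n-1}}{a_n}+\sum_{k=n}^{m-1} R_k\left(\frac{1}{a_k}-\frac{1}{a_{k+1}}\right). \]
Because $\frac{1}{a_k}-\frac{1}{a_{k+1}}\ge 0$ is summable (it telescopes to $1/a_n$) and $\{R_k\}$ is bounded, letting $m\to\infty$ shows both that the tail series $\sum_{k\ge n}b_k$ converges and that
\[ a_n\sum_{k=n}^{\infty} b_k=-R_{n-1}+a_n\sum_{k=n}^{\infty} R_k\left(\frac{1}{a_k}-\frac{1}{a_{k+1}}\right). \]
Here the weights $a_n(\tfrac{1}{a_k}-\tfrac{1}{a_{k+1}})$ are nonnegative and sum to $a_n\cdot\frac{1}{a_n}=1$, so the last term is again a weighted average of $\{R_k\}_{k\ge n}$; both terms vanish as $n\to\infty$ since $R_k\to 0$.

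The only substantive point is the Toeplitz averaging limit used in both parts, which I would settle by the usual head/tail split: given $\varepsilon>0$ choose $K$ with $|\sigma_k-\sigma_\infty|<\varepsilon$ (respectively $|R_k|<\varepsilon$) for $k\ge K$, bound the finite head by its fixed total mass divided by $a_n\to\infty$ (in part (ii) the head does not even appear once $n>K$), and bound the tail by $\varepsilon$ times the total weight. There is no real obstacle beyond bookkeeping; the one thing to keep in mind is that monotonicity of $\{a_n\}$ is used twice in part (ii)---once to make the summation-by-parts remainder absolutely convergent (so the tail sum is well defined) and once to license the weighted-average interpretation---whereas in part (i) it is needed only for the latter.
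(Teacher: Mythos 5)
Your proof is correct: both summation-by-parts computations check out, the weights are nonnegative by monotonicity and have the right total mass, and the head/tail split you describe settles the Toeplitz averaging step. The paper itself states this lemma without proof, citing the classical Kronecker lemma and Heyde (1977, Lemma 1(ii)); your Abel-summation argument is the standard proof of both parts, so there is nothing to reconcile.
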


\subsection{Martingale limit theorems}

\begin{theorem}[Hall and Heyde \cite{HallHeyde80}, Theorem 2.15] \label{thm:HallHeyde80Thm215} Suppose that $\{M_n\}$ is a square-integrable martingale with mean $0$.
Let $d_k=M_k-M_{k-1}$ for $k=1,2,\cdots$, where $M_0=0$. On the event
\[ \left\{ \sum_{k=1}^{\infty}  E[(d_k)^2 \mid \mathcal{F}_{k-1}] <+\infty \right\},  \]
$\{M_n\}$ converges a.s..
\end{theorem}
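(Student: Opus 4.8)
The plan is to reduce to the $L^2$-bounded case by localizing along stopping times built from the predictable quadratic variation, apply Doob's $L^2$ martingale convergence theorem to each localized piece, and then assemble the pieces on the event in the statement.

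First I would write $\langle M\rangle_n := \sum_{k=1}^n E[(d_k)^2 \mid \mathcal{F}_{k-1}]$ for the predictable quadratic variation and record that, since each summand is $\mathcal{F}_{k-1}$-measurable and $\langle M\rangle_n$ is nondecreasing in $n$, the quantity $\langle M\rangle_n$ is $\mathcal{F}_{n-1}$-measurable. For each integer $N \geq 1$ I would then define
\[ \tau_N := \inf\{ n \geq 0 : \langle M\rangle_{n+1} > N \} \qquad (\inf\emptyset := +\infty). \]
Because $\{\tau_N \leq n\} = \{\langle M\rangle_{n+1} > N\} \in \mathcal{F}_n$ by monotonicity of $\langle M\rangle$, the random time $\tau_N$ is a stopping time; and by its very construction one checks that $\langle M\rangle_{n\wedge\tau_N} \leq N$ for every $n$, whether or not $\tau_N$ is finite.

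Next I would analyze the stopped martingale $M^{(N)}_n := M_{n\wedge\tau_N}$. Its $k$-th increment equals $d_k \mathbf{1}_{\{\tau_N \geq k\}}$, and the event $\{\tau_N \geq k\}$ is $\mathcal{F}_{k-1}$-measurable, so orthogonality of martingale differences collapses the cross terms and gives
\[ E\bigl[(M^{(N)}_n)^2\bigr] = \sum_{k=1}^n E\bigl[\mathbf{1}_{\{\tau_N \geq k\}}\, E[(d_k)^2 \mid \mathcal{F}_{k-1}]\bigr] = E\bigl[\langle M\rangle_{n\wedge\tau_N}\bigr] \leq N. \]
Hence $\{M^{(N)}_n\}_n$ is an $L^2$-bounded martingale, and Doob's $L^2$ martingale convergence theorem furnishes a random variable $M^{(N)}_\infty$ with $M^{(N)}_n \to M^{(N)}_\infty$ almost surely.

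Finally I would patch the pieces together. On $\{\tau_N = +\infty\}$ one has $M_n = M^{(N)}_n$ for all $n$, so the almost-sure convergence of $M^{(N)}_n$ yields convergence of $M_n$ there. Writing $A := \{\langle M\rangle_\infty < +\infty\}$ for the event in the statement, we have $\{\tau_N = +\infty\} = \{\langle M\rangle_\infty \leq N\}$ and $A = \bigcup_{N\geq 1}\{\langle M\rangle_\infty \leq N\}$, so taking the countable union of the exceptional null sets coming from the separate values of $N$ shows that $M_n$ converges almost surely on $A$. The one step demanding care is the $L^2$-boundedness estimate for the stopped martingale, and specifically the $\mathcal{F}_{k-1}$-measurability of $\{\tau_N \geq k\}$, which is exactly what lets the conditioning collapse to $\langle M\rangle_{n\wedge\tau_N}$; everything else is routine.
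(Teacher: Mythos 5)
Your proof is correct and complete: the $\mathcal{F}_{k-1}$-measurability of $\{\tau_N \geq k\}$, the identity $E[(M^{(N)}_n)^2]=E[\langle M\rangle_{n\wedge\tau_N}]\leq N$, and the patching of the null sets over the events $\{\langle M\rangle_\infty \leq N\}$ are all sound. The paper itself gives no proof of this statement --- it is quoted from Hall and Heyde \cite{HallHeyde80}, Theorem 2.15 --- and your stopping-time localization combined with Doob's $L^2$ convergence theorem is precisely the classical argument behind that result (Hall and Heyde state it more generally for $\sum_k E[|d_k|^p \mid \mathcal{F}_{k-1}]$ with $1\leq p\leq 2$, using the same localization device), so your argument matches the cited source in the case $p=2$ relevant here.
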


The following theorem is a special case of Heyde \cite{Heyde77}, Theorem 1 (b).

\begin{theorem} \label{thm:Heyde77Theorem1b} Suppose that $\{M_n\}$ is a square-integrable martingale with mean $0$. Let $d_k=M_k-M_{k-1}$ for $k=1,2,\cdots$, where $M_0=0$. If  
\[ \displaystyle \sum_{k=1}^{\infty} E[(d_k)^2] < +\infty \]
holds in addition, then we have the following: Let $\displaystyle s_n^2 := \sum_{k=n}^{\infty} E[ (d_k)^2]$.
\begin{itemize}
\item[(i)] The limit $M_{\infty}:=\sum_{k=1}^{\infty} d_k$ exists a.s., and $M_n \stackrel{L^2}{\to} M_{\infty}$.
\item[(ii)] Assume that
\begin{itemize}  
\item[a)] $\displaystyle \dfrac{1}{s_n^2}  \sum_{k=n}^{\infty} (d_k)^2 \to 1$ as $n \to \infty$ in probability, and
\item[b)] $\displaystyle \lim_{n \to \infty} \dfrac{1}{s_n^2}E\left[  \sup_{k \geq n} (d_k)^2\right]=0$.
\end{itemize}
Then we have
\[ \dfrac{M_{\infty} - M_n}{s_{n+1}} = \dfrac{ \sum_{k=n+1}^{\infty} d_k}{s_{n+1}}\stackrel{\text{d}}{\to}  N(0,1). 
\] 
\item[(iii)] Assume that the following three conditions hold:
\begin{itemize}
\item[a')] $\displaystyle \dfrac{1}{s_n^2}  \sum_{k=n}^{\infty} (d_k)^2 \to 1$ as $n \to \infty$ a.s.,
\item[c)] $\displaystyle \sum_{k=1}^{\infty} \dfrac{1}{s_k} E[ |d_k| : |d_k| > \varepsilon s_k] < +\infty$ for any $\varepsilon > 0$, and
\item[d)] $\displaystyle \sum_{k=1}^{\infty} \dfrac{1}{s_k^4} E[ (d_k)^4 : |d_k| \leq \delta s_k] < +\infty$ for some $\delta > 0$.
\end{itemize}
Then $\displaystyle \limsup_{n \to \infty} \pm \dfrac{M_{\infty} - M_n}{\widehat{\phi}(s_{n+1}^2)} =1$ a.s., where $\widehat{\phi}(t):=\sqrt{2t\log |\log t|}$.
\end{itemize}
\end{theorem}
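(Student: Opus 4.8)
The plan is to derive the three parts in turn: part (i) from $L^2$-boundedness, part (ii) from an array martingale central limit theorem, and part (iii) from a Brownian embedding together with the local law of the iterated logarithm; this is how one specializes Heyde \cite{Heyde77}, Theorem 1 (b) to the present hypotheses. For (i), note that $E[(M_n)^2]=\sum_{k=1}^{n}E[(d_k)^2]\le\sum_{k=1}^{\infty}E[(d_k)^2]<+\infty$, so $\{M_n\}$ is $L^2$-bounded. The martingale convergence theorem gives an a.s. limit $M_\infty$, and orthogonality of the increments yields $M_n\stackrel{L^2}{\to}M_\infty$ with $E[(M_\infty-M_n)^2]=s_{n+1}^2$.

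For (ii), I would regard the tail as a triangular array with row index $n$ and increments $X_{n,k}:=d_{n+k}/s_{n+1}$, $k\ge1$, which are martingale differences for $\{\mathcal{F}_{n+k}\}$ whose ($L^2$-convergent) row sum is $(M_\infty-M_n)/s_{n+1}$ by (i). The array martingale central limit theorem (Hall and Heyde \cite{HallHeyde80}) then applies once its two row hypotheses are checked: the quadratic term $\sum_{k\ge1}X_{n,k}^2=s_{n+1}^{-2}\sum_{j>n}(d_j)^2\to1$ in probability, which is exactly condition a) after the shift $n\mapsto n+1$; and the uniform asymptotic negligibility $E[\max_{k\ge1}X_{n,k}^2]=s_{n+1}^{-2}E[\sup_{j>n}(d_j)^2]\to0$, which is exactly condition b). Condition b) simultaneously supplies the Lindeberg condition and the uniform integrability of the maximal summand, so the theorem delivers $(M_\infty-M_n)/s_{n+1}\stackrel{\text{d}}{\to}N(0,1)$.

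For (iii), I would pass through a Brownian embedding. By the Skorokhod--Strassen construction there are a standard Brownian motion $W$ and stopping times $\tau_1\le\tau_2\le\cdots$ with $M_k=W(\tau_k)$ and $E[\tau_k-\tau_{k-1}\mid\mathcal{F}_{k-1}]=E[(d_k)^2\mid\mathcal{F}_{k-1}]$. Setting $u_n:=\tau_\infty-\tau_n$, condition a') forces $s_{n+1}^{-2}\sum_{j>n}(d_j)^2\to1$ a.s., and through the embedding this transfers to $u_n\sim s_{n+1}^2$ a.s.; condition b) gives $s_{n+2}^2/s_{n+1}^2\to1$, so the sampling times $u_n\downarrow0$ leave no asymptotic gaps. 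Since $\{W(\tau_\infty)-W(\tau_\infty-u)\}_{u\ge0}$ is again a Brownian motion, the local law of the iterated logarithm at the origin gives $\limsup_{u\downarrow0}\pm[W(\tau_\infty)-W(\tau_\infty-u)]/\sqrt{2u\log|\log u|}=1$ a.s.; evaluating along $u=u_n$ and recalling $\widehat{\phi}(s_{n+1}^2)=\sqrt{2s_{n+1}^2\log|\log s_{n+1}^2|}$ yields $\limsup_{n\to\infty}\pm(M_\infty-M_n)/\widehat{\phi}(s_{n+1}^2)=1$. Conditions c) and d) make this transfer rigorous: by Borel--Cantelli, c) ensures the large jumps $\{|d_k|>\varepsilon s_k\}$ occur only finitely often and so cannot affect the a.s. fluctuations, while d) furnishes the fourth-moment control of the truncated increments needed both to compare the random clock $u_n$ with $s_{n+1}^2$ and to run the Borel--Cantelli arguments for the matching upper and lower bounds.

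The genuinely routine parts are (i) and, granting the array central limit theorem, also (ii), where a) and b) are precisely the required row conditions. The main obstacle is (iii): transporting the continuous-time Brownian local iterated-logarithm law to the martingale observed only at the discrete random times $\tau_n$. This demands showing that the clock defect $u_n-s_{n+1}^2$ is negligible on the iterated-logarithm scale (from a') and d)), that successive sampling times do not leave gaps (from b)), and that large jumps are asymptotically irrelevant (from c)); fitting these estimates together into matching upper and lower bounds is where the real effort is concentrated.
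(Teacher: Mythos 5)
The paper does not actually prove this statement: it is quoted as a special case of Heyde (1977), Theorem 1\,(b), with Remark~\ref{rem:Heyde77Theorem1b} imported from the proof of Heyde's Corollary~1. So your attempt must be judged against Heyde's argument. Your parts (i) and (ii) are sound and follow the standard route: (i) is $L^2$-bounded martingale convergence with $E[(M_\infty-M_n)^2]=s_{n+1}^2$ by orthogonality of increments, and in (ii) you correctly identify a) and b) as precisely the row hypotheses (convergence of the normalized sum of squares, negligibility and uniform integrability of the maximal term) of the martingale array CLT applied to the tail array $d_{n+k}/s_{n+1}$; apart from the routine truncation needed because the rows are infinite series (which your appeal to the $L^2$-convergent tails covers), this is essentially how the tail CLT is obtained.

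Part (iii), however — which you yourself flag as the crux — contains a genuine error. You reverse the embedded Brownian motion at the random time $\tau_\infty$ and assert that $\{W(\tau_\infty)-W(\tau_\infty-u)\}_{u\ge 0}$ is again a Brownian motion. This is false: $\tau_\infty=\lim_n \tau_n$ is a path-dependent stopping time, and time reversal at such a time does not produce a Brownian motion; more to the point, the local law of the iterated logarithm holds a.s.\ \emph{at each fixed time}, not simultaneously at all times — Brownian paths possess exceptional ``fast'' and ``slow'' points where the local $\limsup$ constant differs from $1$, and nothing prevents the path-dependent time $\tau_\infty$ from sitting on one. So the central transfer step of your sketch fails as stated. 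Two further defects: you invoke condition b) inside (iii), but b) is \emph{not} among the hypotheses of (iii); the fact you want there, $s_{n+2}^2/s_{n+1}^2\to 1$, must instead be extracted from a$'$) together with c) (c) gives $\sum_k P(|d_k|>\varepsilon s_k)<+\infty$ by Markov's inequality, hence $d_k/s_k\to 0$ a.s., and differencing a$'$) then rules out geometric gaps in $s_n^2$). And the claim that c) and d) ``make the clock transfer rigorous'' is an assertion, not an argument: d) controls only the \emph{truncated} fourth moments $E[(d_k)^4 : |d_k|\le \delta s_k]$ (the full fourth moments may be infinite), so one must first truncate at level $\delta s_k$, dispose of the large parts via c) and Borel--Cantelli, and then run exponential upper/lower-class estimates — or, in the embedding picture, a \emph{forward} comparison of $W(\tau_\infty)-W(\tau_n)$ with increments over deterministic intervals using uniform a.s.\ increment bounds, never a reversal at $\tau_\infty$. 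That truncation-plus-exponential-bounds scheme is the actual content of Heyde's proof of the LIL supplement, and it is exactly the part your sketch leaves unproved.
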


\begin{remark} \label{rem:Heyde77Theorem1b} 
A sufficient condition for b) in Theorem \ref{thm:Heyde77Theorem1b} is that 
\[ \displaystyle \lim_{n \to \infty}\dfrac{1}{s_n^2}  \sum_{k=n}^{\infty} E[ (d_k)^2 : |d_k| > \varepsilon s_n] =0 \]
for any $\varepsilon > 0$. (See the proof of Corollary 1 in Heyde \cite{Heyde77}.)
\end{remark}

\section*{Acknowledgements}

N.K. is partially supported by JSPS Grant-in-Aid for Young Scientists (B) No. 16K17620. M.T. is partially supported by JSPS Grant-in-Aid for Young Scientists (B) No. 16K21039, and JSPS  Grant-in-Aid for Scientific Research (C) No. 19K03514.


\end{document}